\newif\ifsimax
\DeclareMathOperator*{\argmin}{\arg\min}
\DeclareMathOperator{\diag}{diag}
\DeclareMathOperator{\Span}{span}
\DeclareMathOperator{\trace}{trace}
\newcommand*{\set}[1]{\lbrace#1\rbrace}
\newcommand*{\herm}{^*}
\newcommand*{\pinv}{^{\dagger}}
\newcommand{\bmat}[1]{\begin{bmatrix}#1\end{bmatrix}}
\def\adots{\mathinner{\mkern2mu\raise1pt\hbox{.}\mkern2mu
    \raise4pt\hbox{.}\mkern2mu\raise7pt\hbox{.}\mkern1mu}}
\DeclareMathOperator{\nnz}{nnz}
\DeclareMathOperator{\spec}{spec}
\newcommand*{\nev}{{n_{\mathrm{ev}}}}
\newcommand*{\nex}{{n_{\mathrm{ex}}}}
\newcommand*{\nes}{{n_{\mathrm{es}}}}
\newcommand*{\nnow}{{n_{\mathrm{now}}}}
\newcommand*{\jwarm}{j_{\mathrm{warm}}}
\newcommand*{\rwarm}{r_{\mathrm{warm}}}
\newcommand*{\js}{{j_{\mathrm{s}}}}
\newcommand*{\je}{{j_{\mathrm{e}}}}
\newcommand*{\jp}{{j_{\mathrm{p}}}}
\newcommand*{\jl}{{j_{\mathrm{l}}}}
\newcommand*{\teta}{{\tilde\eta}}
\newcommand*{\heta}{{\hat\eta}}
\newcommand*{\ceta}{{\check\eta}}
\newcommand*{\Xexp}{{X_{\mathrm{exp}}}}
\newcommand*{\Xone}{X_k}
\newcommand*{\Xtwo}{X_{l\backslash k}}
\newcommand*{\Xthr}{{X_l^{\perp}}}
\newcommand*{\Deltaone}{\Delta_k}
\newcommand*{\Deltatwo}{\Delta_{l\backslash k}}
\newcommand*{\Deltathr}{{\Delta_l^{\perp}}}
\newcommand*{\Sigmaone}{\Sigma_k}
\newcommand*{\Sigmatwo}{\Sigma_{l\backslash k}}
\newcommand*{\Sigmathr}{{\Sigma_l^{\perp}}}
\newcommand*{\Xnew}{{\breve X}}
\newcommand*{\Xmid}{X_{\mathrm{mid}}}
\newcommand*{\epsexp}{\epsilon_{\mathrm{exp}}}
\newcommand*{\cmax}{{c_{\max}}}
\newcommand*{\Xdrop}{X_{\mathrm{drop}}}
\newcommand*{\Sdrop}{X_{\mathrm{drop}}}
\newcommand*{\termgk}{{\xi}}
\newcommand{\RMIS}[2]{#2}
\newcommand{\RMISBLOCK}[2]{#2}
\newcommand{\ISALL}{}
\newcommand{\ISALLEND}{}
\definecolor{bkgndcolor}{rgb}{1,1,1}
\newenvironment{keywords}{\medskip\textbf{Keywords:}}{}
\newenvironment{MSCcodes}{\medskip\textbf{AMS subject classifications (2020).}}{}
\newtheorem{theorem}{Theorem}
\newtheorem{proposition}[theorem]{Proposition}
\newtheorem{remark}{Remark}
\title{On a shrink-and-expand technique for symmetric block eigensolvers}
\author[1]{Yuqi Liu}
\author[2]{Yuxin Ma}
\author[3,4]{Meiyue Shao}
\affil[1]{School of Mathematical Sciences, Fudan University, Shanghai 200433,
China}
\affil[2]{Department of Numerical Mathematics, Faculty of Mathematics and Physics, Charles University, Sokolovsk\'{a} 49/83, 186 75 Praha 8, Czechia}
\affil[3]{School of Data Science, Fudan University, Shanghai 200433, China}
\affil[4]{MOE Key Laboratory for Computational Physical Sciences, Fudan University, Shanghai 200433, China}
\date{\today}
\begin{document}
\pagecolor{bkgndcolor}

\maketitle

\begin{abstract}
In \RMIS{}{symmetric} block eigenvalue algorithms, such as the
subspace iteration algorithm and the locally optimal block preconditioned
conjugate gradient (LOBPCG) algorithm, a large block size is often employed
to achieve robustness and rapid convergence.
However, using a large block size also increases the computational cost.
Traditionally, the block size is typically reduced after convergence of some
eigenpairs, known as deflation.
In this work, we propose a non-deflation-based, more aggressive technique,
where the block size is adjusted dynamically during the algorithm.
This technique can be applied to a wide range of block eigensolvers, reducing
computational cost without compromising convergence speed.
We present three adaptive strategies for adjusting the block size, and apply
them to four well-known eigensolvers as examples.
\RMIS{Theoretical}{Detailed theoretical} analysis and numerical experiments are provided to illustrate the
efficiency of the proposed technique.
In practice, an overall acceleration of \(20\%\) to \(30\%\) is observed.
\end{abstract}

\begin{keywords}
Symmetric eigenvalue problem, block eigensolver,
Rayleigh--Ritz process, shrink-and-expand technique
\end{keywords}

\begin{MSCcodes}
65F10, 65F15, 65F50
\end{MSCcodes}

\section{Introduction}
\label{sec:intro}

Given a large, sparse Hermitian matrix \(A\in\mathbb{C}^{n\times n}\), this
work considers to compute \RMIS{a subset of eigenparis}{\(\nev\) eigenpairs}
of \(A\) satisfying
\[
AV=V\Lambda,
\]
where the diagonal matrix \(\Lambda\in\mathbb{C}^{\nev\times\nev}\) contains
the eigenvalues, and the columns of \(V\in\mathbb{C}^{n\times\nev}\) consist
of the corresponding eigenvectors.
This problem is frequently encountered in various applications, such as PDE,
electronic structure
calculations, and machine learning; see, for example,
\cite{BGHRCV2010, KMS2023, Saad2011}.

\RMIS{
Researchers have proposed many algorithms for solving this problem, such as
the subspace iteration (SI) algorithm~\cite{Demmel1997}, preconditioned
inverse iteration (PINVIT) algorithm~\cite{N2002}, the locally optimal block
preconditioned conjugate gradient (LOBPCG) algorithm~\cite{Knyazev2001}, the
steepest descent (SD) algorithm~\cite{N2012}, the conjugate gradient (CG)
algorithm~\cite{FO1996}, the trace minimization (TraceMIN)
algorithm~\cite{SW1982}, and the Rayleigh--Ritz type method with contour
integration (CIRR)~\cite{SS2007}, etc.
These algorithms mentioned above share a common feature---a search subspace
with fixed dimension is stored and used for generating the approximation to
the desired invariant subspace in each iteration.
The dimension of the search subspace is not changed until some eigenpairs have
converged.
}{}
%%%%%%%%%%%%%%%%%%%%%%%%%%%%%%%%%%%%%%%%%%%%%%%%%%%%%%%%%%%%%%%%%%%%%%%%%%%%%%
\RMIS{}
{To address large, sparse eigenvalue problems, projection methods are 
frequently employed.
These include eigensolvers based on the Lanczos\slash{}Arnoldi 
process~\cite{GU1977,Sorensen1997,Stewart2002}, the Davidson-type 
algorithm~\cite{SV2000,zhou2010,ZS2007}, the subspace iteration (SI) 
algorithm~\cite{Demmel1997}, the preconditioned inverse iteration (PINVIT) 
algorithm~\cite{N2002}, the locally optimal block preconditioned conjugate 
gradient (LOBPCG) algorithm~\cite{Knyazev2001}, the steepest descent (SD) 
algorithm~\cite{N2012}, the trace minimization (TraceMIN) algorithm~\cite{SW1982},
and the Rayleigh--Ritz type method with contour integration (CIRR)~\cite{SS2007}.
These techniques begin by forming a search space with a significantly reduced 
dimension compared to \(n\), and then solve the projected problem on this 
smaller subspace to acquire approximate eigenpairs.
It is important to note that nearly all these algorithms have block versions. 
The distinction among these projection methods lies in the fact that, aside 
from the Lanczos\slash{}Arnoldi-based and Davidson-type algorithms, the 
others maintain a fixed-size subspace while iteratively refining it to better 
approximate the target invariant subspace, which is the main focus of this work.
Hereafter, the term \textit{block eigensolver} primarily refers to these 
\textit{fixed block size eigensolvers}.
}

In this paper, we will generalize the \emph{shrink-and-expand} technique in
the recent work~\cite{LSS2024} to accelerate the block eigensolvers mentioned
above.
Let us first briefly introduce this technique using the SI algorithm,
which is one of the simplest and representative block eigensolver.%
\footnote{We always assume that a Rayleigh--Ritz process is applied to the SI
algorithm in this paper.}
Though SI is not a popular choice in practice, many advanced modern block
eigensolvers employ the basic idea of SI.

To compute \(\nev\) largest magnitude eigenvalues of \(A\) using an initial
guess \(X^{(0)}\in\mathbb{C}^{n\times\nex}\) with
\RMIS{\(\nex\geq\nev\)}{\(\nex=\nev\)}, the SI algorithm produces a sequence
of subspaces \(\mathcal X^{(j)}=\Span\{X^{(j)}\}\) by performing a Rayleigh--Ritz
process on orthogonalized \(AX^{(j-1)}\).
It has been proved (see, e.g., \cite{Demmel1997}) that~\(\mathcal X^{(j)}\)
eventually converges to the desired invariant subspace, under some mild
conditions on \(A\) and~\(X^{(0)}\).
\RMIS{However, in practice, the convergence rate depends heavily on the
eigenvalue gap, and is often unsatisfactory.}{}

\RMIS{}
{However, the convergence rate of the SI algorithm depends heavily on the
gap between the \(\nev\)th and \((\nev+1)\)st largest
eigenvalues~\cite[Chapter 4]{Parlett1998}.
In practice, especially when the \(\nev\)th and \((\nev+1)\)st eigenvalues
belong to the same tightly clustered group of eigenvalues, this gap can be very
small, leading
to extremely slow convergence.}

\RMIS{Therefore}{To address this issue}, people usually use a search space
whose dimension (or block size) \(\nex\) is larger than \(\nev\) \RMIS{to
obtain a better convergence rate.}{to reduce the impact of eigenvalue
clusters on convergence.}
As proved in~\RMIS{[1,Chapter 8]}{\cite[Chapter 14]{Parlett1998}}, a
larger \(\nex\) can significantly reduce the number of iterations required
for convergence; see also Figure~\ref{fig:intro1}~(left).
\RMIS{}
{Actually, the idea of keeping a larger eigenvalue gap to avoid the influence
of clusters has also been used in other eigensolvers, such as the block
Lanczos algorithm~\cite{Ye1996} and the Davidson algorithm~\cite{SSW1998}.}

However, the cost of orthogonalization and sparse matrix--vector
multiplication (SpMV) in each iteration \RMIS{increases at the same time}
{also increases as we enlarge the search space}, which means that in
order to achieve a better performance, it is necessary to make a
trade-off between the cost per iteration and the total number of iterations.
\RMIS{}{See Table~\ref{tab:nexs} in Section~\ref{sub-sec:numexp-block} for
some experimental results.}

\begin{figure}[!tb]
\centering
\ifsimax
\includegraphics[height=6.3cm]{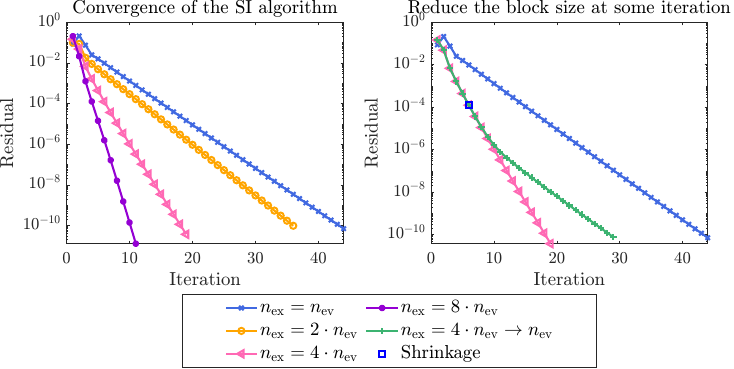}
\else
\includegraphics[height=7cm]{intro1}
\fi
\caption{Left: Use the SI algorithm to approximate five largest eigenpairs
of the matrix \texttt{bcsstk08}.
Here, \(\nev=5\) is the number of desired eigenpairs, and \(\nex\) is the
block size.
The number of iterations to converge decreases rapidly when \(\nex\) becomes
larger.
Right: Use a block size of \(\nex=4\cdot\nev\) for the first five iterations,
and then reduce the block size down to \(\nex=\nev\) by selecting eigenvectors
corresponding to the \(\nex\) largest eigenvalues.
The high convergence rate is maintained for a few more iterations.}
\label{fig:intro1}
\end{figure}

In our recent work~\cite{LSS2024}, we found that for the SI algorithm with
block size \(\nex>\nev\), if we cut the block size \(\nex\) down to \(\nev\)
at a certain point, the convergence rate does not immediately decline.
Instead, it maintains the high-speed convergence for several iterations
before gradually slowing down; see Figure~\ref{fig:intro1}~(right) for an
example.
In this example, \RMIS{even we}{even when we} reduced the block size after the \RMIS{fifth}{\(5\)th} iteration,
the residual curve keeps its original slope nearly unchanged for another five
iterations, and does not decline towards the asymptotic convergence rate with
\(\nex=\nev\) before the \(15\)th iteration.%
\footnote{The block sizes in Figures~\ref{fig:intro1} and~\ref{fig:intro2} are
chosen unnecessarily large to illustrate the idea.
More practical experiments can be found in Section~\ref{sec:numexp}.}
This phenomenon suggests that it is possible to reduce the block size to
decrease the cost of orthogonalization and SpMV per iteration, while
maintaining a relatively high convergence rate.

We remark that there are several existing techniques that reduce the
computational cost by reducing the size of the search space.
The most widely used one is the so-called \emph{deflation} technique,
including hard locking~\cite{Parlett1998},
soft locking~\cite{DSYG2018,KALO2007}, etc.
However, in the deflation process, the block size is not reduced until some
eigenpairs have converged, which has limited impact on the overall convergence
of the eigensolver.
Some other techniques, such as moving-window~\cite{SVX2015,Xue2018} and
spectral slicing~\cite{LXE2019,XLS2018}, also reduce the size of the search
space by dividing the large desired invariant subspace into several smaller
ones.

Unlike spectral slicing or moving-window, in the shrink-and-expand technique
the dimension of the search space is reduced \RMIS{}{aggressively }while the
desired invariant subspace is not divided into smaller ones.
Certainly, the convergence rate eventually declines if we keep iterating with
a smaller block size.
Thus, a natural idea is to periodically increase the block size in order to
bring the convergence rate back to a higher level.
These ideas can be illustrated in Figure~\ref{fig:intro2}, where the
shrinkage and the expansion are integrated together.

\begin{figure}[!tb]
\centering
\includegraphics[height=6cm]{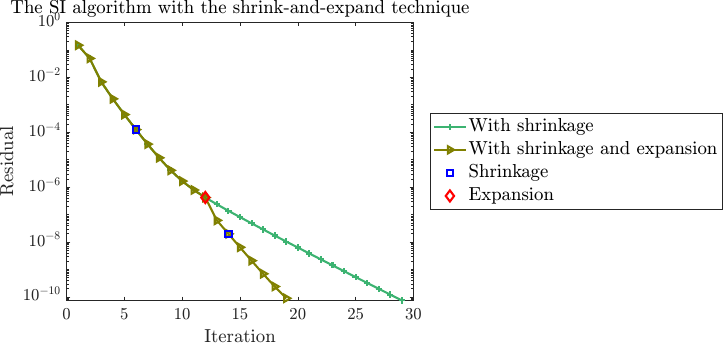}
\caption{Use the SI algorithm to approximate five largest eigenpairs of the
matrix \texttt{bcsstk08}, with \(\nex=4\cdot\nev\).
\RMIS{The}{Different from the curve with \(\nex=4\cdot\nev\) in
Figure~\ref{fig:intro1}, the}
block size is increased at the \RMIS{twelfth}{\(12\)th} iteration and, again, is decreased
at the \(14\)th iteration.
The convergence rate recovers rapidly after the expansion.}
\label{fig:intro2}
\end{figure}

The contributions of this work can be summarized as follows.
We generalize the shrink-and-expand technique to a wide range of block
eigensolvers, and further propose several strategies to perform the
shrink-and-expand technique adaptively in the computation.
\RMIS{A brief guideline of theoretical proof is also provided to stress the
effectiveness of this technique.}
{Both an illustrative example and a detailed proof are provided to stress the
effectiveness of this technique theoretically.}
Numerical experiments demonstrate that the shrink-and-expand technique can
result in a speedup of \(20\%\)--\(30\%\) on average and up to \(50\%\),
in block eigensolvers such as SI, SD, LOBPCG, and TraceMIN.

The remainder of this paper is organized as follows.
In Section~\ref{sec:prelim}, we briefly review several block eigensolvers,
including SI, SD, LOBPCG, and TraceMIN.
Then in Section~\ref{sec:algo}, we propose strategies to perform the
shrink-and-expand technique adaptively, and discuss some implementation
details.
\RMIS{Theoretical analysis is provided in Section~\ref{sec:3dexample}, and
numerical experiments are shown in Section~\ref{sec:numexp} to illustrate the
effectiveness and efficiency of the shrink-and-expand technique.}
{The theoretical analysis is divided into two sections.
In Section~\ref{sec:3dexample}, we first illustrate the fundamental logic of
the shrink-and-expand technique using a simple and intuitive example.
Then a detailed proof is provided for the general cases in
Section~\ref{sec:conv}.
Lastly, numerical experiments are shown in Section~\ref{sec:numexp} to
illustrate the effectiveness and efficiency of the shrink-and-expand
technique.}

\section{Preliminaries on block eigensolvers}
\label{sec:prelim}
In~\cite{LSS2024}, \RMIS{the shrink-and-expand technique has
been shown to be effective in the FEAST algorithm.}
{a specific shrinkage is applied at the second iteration of the FEAST
algorithm, and is shown to be effective.}
To illustrate that \RMIS{this}{the shrink-and-expand} technique can be
applied to a wider range of block eigensolvers and achieve a significant
acceleration, we selected four representative algorithms to implement
this technique in this paper.
In the following we make a brief introduction to these algorithms.

\subsection{Subspace iteration}
\label{sub-sec:si}
The subspace iteration (SI) algorithm is one of the most classical block
eigensolvers \RMIS{}{that computes a few largest eigenvalues in magnitudes
and the corresponding eigenvectors}~\cite{Demmel1997,Saad2011}.
As already mentioned, the SI algorithm iteratively obtain \(X^{(j+1)}\) by
orthogonalizing \(AX^{(j)}\), where \(X^{(j)}\in\mathbb{C}^{n\times\nex}\),
\(j=0\), \(1\), \(\dotsc\), and \(X^{(0)}\) is an initial guess.
Then, a Rayleigh--Ritz process can be applied on \(X^{(j+1)}\) to obtain the
approximate eigenpairs~\cite[Section 5.2]{Saad2011}.
In this paper, we always assume that \RMIS{}{the columns of} \(X^{(j)}\), \(j>0\), \RMIS{is}{are} already the
approximate eigenvectors obtained from the Rayleigh--Ritz process and are
arranged in ascending order corresponding to the Ritz values.
\RMIS{Detailed algorithm of the SI algorithm is presented in
Algorithm~\ref{alg:SI}.}{}

\begin{algorithm}[!tb]
\caption{The SI algorithm with the shrink-and-expand technique.}
\label{alg:SI}
\begin{algorithmic}[1]
\REQUIRE A Hermitian matrix \(A\in\mathbb{C}^{n\times n}\), an initial guess
\(X^{(0)}\in\mathbb{C}^{n\times\nex}\), a shift \(\zeta\in\mathbb{C}\), the
number of desired eigenpairs \(\nev\), the number of vectors kept after the
shrinkage \(\nes\).
\ENSURE The diagonal matrix \(\Lambda\) contains the \RMIS{computed}{}
eigenvalues \RMIS{}{closest to \(\zeta\)},
and \(X\in\mathbb{C}^{n\times\nev}\) contains the corresponding
\RMIS{computed}{} eigenvectors.
\STATE Compute \(X\) by orthogonalizing \(X^{(0)}\)\RMIS{ by QR decomposition}
{}.
\STATE \(A_{\rm{p}}\gets X\herm AX\).
\STATE Compute spectral decomposition \(A_{\rm{p}}=Z\Lambda Z\herm\),
where \(\Lambda\) has diagonals with ascending magnitudes.
\STATE \(X\gets XZ\).
\FOR{\(j=1,2,\dotsc\) until convergence}
  \STATE \(R=AX-X\Lambda\).
  \STATE Check convergence.
  \IF{\(\nev\) smallest eigenpairs have converged}
    \RETURN \(\Lambda\gets\Lambda(1:\nev,1:\nev)\), \(X\gets X(:,1:\nev)\).
  \ENDIF
  \STATE \RMIS{\(X\gets A^{-1}X\).}{\(X\gets (A-\zeta I)^{-1}X\).}
  \label{alg-step:solvesi}
  \IF{\(\mathtt{ifexpand()}\)}
    \STATE \(X\gets [X,\Xdrop]\).
  \ENDIF
  \STATE Orthogonalize \(X\)\RMIS{ by QR decomposition}{}.
  \STATE \(A_{\rm{p}}\gets X\herm AX\).
  \STATE Compute spectral decomposition \(A_{\rm{p}}=Z\Lambda Z\herm\),
  where \(\Lambda\) has diagonals with ascending magnitudes.
  \STATE \(X\gets XZ\).
  \IF{\(\mathtt{ifshrink()}\)}
    \STATE \(\Lambda\gets\Lambda(1:\nes:1:\nes)\), \(\Xdrop\gets X(:,\nes+1:{\tt{end}})\), \(X\gets X(:,1:\nes)\).
  \ENDIF
\ENDFOR
\end{algorithmic}
\end{algorithm}

The convergence of the SI algorithm depends heavily on the eigenvalue
distribution.
If we arrange the eigenvalues of \(A\) in ascending orders as
\(\lvert\lambda_1\rvert\leq\lvert\lambda_2\rvert\leq\dotsb
\leq\lvert\lambda_n\rvert\),
it can be proved that the asymptotic convergence rate of the SI algorithm
is \(\lvert\lambda_{n-\nev+1}/\lambda_{n-\nex}\rvert\)\RMIS{~[1]}
{~\cite{Parlett1998}}, where \(\nev\) is the number of desired eigenvalues,
and \(\nex\) is the block size.
Consequently, a larger block size generally corresponds to a wider eigenvalue
gap, thereby leading to faster convergence.

\RMIS{Moreover, this also indicates that the shift-and-invert technique can be
employed in the SI algorithm to compute eigenpairs closest to
\(\sigma\in\mathbb C\), like \((A-\sigma I)^{-1}X^{(j)}\).
For instance, we can apply SI to \(A^{-1}\) to compute the smallest
eigenvalues of a Hermitian and positive definite \(A\).}
{In practice, the SI algorithm is often employed jointly with the
shift-and-invert technique.
To compute a few eigenvalues closest to \(\zeta\in\mathbb C\), we replace
\(AX^{(j)}\) by \((A-\zeta I)^{-1}X^{(j)}\).
The SI algorithm with shift-and-invert is presented in Algorithm~\ref{alg:SI}.%
\footnote{To save space, we only provide the algorithms incorporated with the
shrink-and-expand technique in Algorithm~\ref{alg:SI}--\ref{alg:TM}.
For the original algorithms without this technique, readers may simply assume
that \texttt{ifshrink()} and \texttt{ifexpand()} always return
\texttt{false}.}
In Section~\ref{sec:numexp}, we always use a zero shift (i.e., \(\zeta=0\))
to compute the smallest eigenvalues of a Hermitian and positive definite
\(A\).
And the operation \(X\gets A^{-1}X\) in SI is solved by the sparse direct
method.
We remark that sparse factorization only needs to be performed once
the first solves---subsequent solves can make use of the existing
factorization.}

\subsection{Steepest descent}
The steepest descent (SD) algorithm~\cite{N2012} is also a classical
eigensolver \RMIS{}{that computes a few (algebraically) smallest eigenpairs}.
Its block version starts from an initial guess \(X^{(0)}\) with orthogonal
columns and whose (\(j+1\))st iteration takes the form
\[
X^{(j+1)} = X^{(j)}C_1^{(j)} + R^{(j)}C_2^{(j)},
\]
where \(R^{(j)} = AX^{(j)}-X^{(j)}\Lambda^{(j)}\) with
\(\Lambda^{(j)} = (X^{(j)})\herm AX^{(j)}\).
The matrices \(C_1^{(j)}\) and \(C_2^{(j)}\) are computed from the
Rayleigh--Ritz process on \(\Span\{X^{(j)},R^{(j)}\}\) so that
\(X^{(j+1)}\) consists of the Ritz vectors with respect to the \(\nex\)
smallest Ritz values.
When a preconditioner \(T\) is available, the search space
\(\Span\{X^{(j)},R^{(j)}\}\) can be replaced by
\(\Span\{X^{(j)},TR^{(j)}\}\).
We summarized the SD algorithm in Algorithm~\ref{alg:SD}.
%\RMIS{}{Note that, for a fair comparison, we always set the preconditioner
%\(T\) to be the identity in the numerical experiments of
%Section~\ref{sec:numexp}.}

\begin{algorithm}[!tb]
\caption{The SD algorithm with the shrink-and-expand technique.}
\label{alg:SD}
\begin{algorithmic}[1]
\REQUIRE A Hermitian matrix \(A\in\mathbb{C}^{n\times n}\), an initial guess
\(X^{(0)}\in\mathbb{C}^{n\times \nex}\), the preconditioner \(T\), the number
of desired eigenpairs \(\nev\), and the number of vectors kept after
the shrinkage \(\nes\).
\ENSURE The diagonal matrix \(\Lambda\) contains the \RMIS{computed}{smallest}
eigenvalues,
and \(X\in\mathbb{C}^{n\times\nev}\) contains the corresponding
\RMIS{computed}{} eigenvectors.

\STATE Compute \(X\) by orthogonalizing \(X^{(0)}\)\RMIS{ by QR decomposition}{}.
\STATE \(A_{\rm{p}}\gets X\herm AX\).
\STATE Compute spectral decomposition \(A_{\rm{p}}=Z\Lambda Z\herm\),
where \(\Lambda\) has diagonals with ascending magnitudes.
\STATE \(X\gets XZ\)\RMIS{}{, \(\nnow\gets\nex\)}.
\FOR{\(j=1,2,\dotsc\) until convergence}
  \STATE \(R\gets AX-X\Lambda\).
  \IF{\(\nev\) smallest eigenpairs have converged}
    \RETURN \(\Lambda\gets\Lambda(1:\nev,1:\nev)\), \(X\gets X(:,1:\nev)\).
  \ENDIF
  \IF{\(\mathtt{ifexpand()}\)}
    \STATE \(X\gets [X, \Xdrop]\).
    \RMIS{}{\STATE \(\nnow\gets\nex\).}
  \ENDIF
  \STATE \(W\gets TR\).
  \STATE \(W\gets W-X(X\herm W)\) and orthogonalize \(W\)\RMIS{ by QR decomposition}{}.
  \STATE \(S\gets[X,W]\).
  \STATE \(A_{\rm{p}}\gets S\herm AS\).
  \STATE Compute spectral decomposition \(A_{\rm{p}}=Z\Lambda Z\herm\),
  where \(\Lambda\) has diagonals with ascending magnitudes.
  \STATE \(X\gets SZ(:,1:\RMIS{\nex}{\nnow})\).
  \IF{\(\mathtt{ifshrink()}\)}
    \STATE \(\Xdrop\gets X(:, \nes+1:{\tt{end}})\).
    \STATE \(\Lambda\gets\Lambda(1:\nes, 1:\nes)\), \(X\gets X(:, \RMIS{\nes}{1:\nes})\).
    \RMIS{}{\STATE \(\nnow\gets\nes\).}
  \ENDIF
\ENDFOR
\end{algorithmic}
\end{algorithm}

\subsection{LOBPCG}
The locally block optimal preconditioned conjugate gradient (LOBPCG)
algorithm~\cite{KALO2007,Knyazev2001} is a popular block eigensolver that
computes \RMIS{the}{a few (algebraically)} smallest eigenpairs of a large
Hermitian matrix~\(A\),
especially when a good preconditioner \(T\) is available.
Starting from an initial guess \(X^{(0)}\) with orthogonal columns, the
(\(j+1\))st iteration of the LOBPCG algorithm takes the form
\[
X^{(j+1)} = X^{(j)}C_1^{(j)} + X^{(j-1)}C_2^{(j)} + W^{(j)}C_3^{(j)},
\]
where \(W^{(j)} = T(AX^{(j)}-X^{(j)}\Lambda^{(j)})\) with
\(\Lambda^{(j)} = X^{(j)*} AX^{(j)}\).
Similarly to the SD algorithm, the matrices \(C_1^{(j)}\), \(C_2^{(j)}\), and
\(C_3^{(j)}\) are chosen optimally from a Rayleigh--Ritz process on the
search space \(\Span\{X^{(j)},X^{(j-1)},W^{(j)}\}\) to obtain the
\(\nex\) smallest Ritz pairs.

\RMIS{We remark that a practical implementation of LOBPCG is usually quite
complicated, aiming to make a trade-off between performance and numerical
stability.
See~\cite{DSYG2018} for implementation details, including an improved
Hetmaniuk--Lehoucq (HL) trick, and soft-locking-based deflation.
We summarize the LOBPCG algorithm in Algorithm~\ref{alg:LOBPCG}.}
{To better reflect the performance of the algorithm in practice, we employ an
improved Hetmaniuk--Lehoucq trick and soft locking in our implementation of
the LOBPCG algorithm.
The improved Hetmaniuk--Lehoucq trick here is a basis selection strategy for
LOBPCG that produces an orthogonal basis \([X^{(j+1)},P^{(j+1)}]\) of
\(\Span\{X^{(j+1)},X^{(j)}\}\) by a clever manipulation of the output of the
Rayleigh--Ritz process;
see~\cite[Section~4.2]{DSYG2018} for details.
We summarize the algorithm in Algorithm~\ref{alg:LOBPCG}.
%Also, similar to the SD algorithm, the preconditioner \(T\) is set to be
%identity in the numerical experiments of Section~\ref{sec:numexp}.%
}

\begin{algorithm}[!tb]
\caption{The LOBPCG algorithm with the shrink-and-expand technique.}
\label{alg:LOBPCG}
\begin{algorithmic}[1]
\REQUIRE A Hermitian matrix \(A\in\mathbb{C}^{n\times n}\), an initial guess
\(X^{(0)}\in\mathbb{C}^{n\times\nex}\), the preconditioner \(T\), the number
of desired eigenpairs \(\nev\), and the number of vectors kept after the
shrinkage \(\nes\).
\ENSURE The diagonal matrix \(\Lambda\) contains the \RMIS{computed}{smallest}
eigenvalues,
and \(X\in\mathbb{C}^{n\times\nev}\) contains the corresponding
\RMIS{computed}{} eigenvectors.
\STATE Compute \(X\) by orthogonalizing \(X^{(0)}\)\RMIS{ by QR decomposition}{}.
\STATE \(A_{\rm{p}}\gets X\herm AX\).
\STATE Compute spectral decomposition \(A_{\rm{p}}=Z\Lambda Z\herm\),
where \(\Lambda\) has diagonals with ascending magnitudes.
\STATE \(X\gets XZ\).
\STATE \(P\gets [~]\)\RMIS{}{, \(\nnow\gets\nex\)}.
\FOR{\(j=1,2,\dotsc\) until convergence}
  \STATE \(R\gets AX-X\Lambda\).
  \IF{\(\nev\) smallest eigenpairs have converged}
    \RETURN \(\Lambda\gets\Lambda(1:\nev,1:\nev)\), \(X\gets X(:,1:\nev)\).
  \ELSE
    \STATE Deflate soft-locked columns from \(P\) and \(R\)\RMIS{}{~and update \(\nnow\) accordingly}.
  \ENDIF
  \STATE \(W\gets TR\).
  \STATE \(W\gets W-[X, P]\bigl([X, P]\herm W\bigr)\) and orthogonalize \(W\)\RMIS{ by QR decomposition}{}.
  \STATE \(S\gets[X,P,W]\).
  \IF{\(\mathtt{ifexpand()}\)}
    \STATE \(\Sdrop\gets\Sdrop-S(S\herm \Sdrop)\) and orthogonalize \(\Sdrop\)\RMIS{ by QR decomposition}{}.
    \STATE \(X\gets [X,\Sdrop(:,1:{\tt{end}}/2)]\), \(P\gets [\RMIS{X}{P},\Sdrop(:,{\tt{end}}/2+1:{\tt{end}})]\), \(S\gets [X,P,\RMIS{R}{W}]\).
    \RMIS{}{\STATE \(\nnow\gets\nex\).}
  \ENDIF
  \STATE \(A_{\rm{p}}\gets S\herm AS\).
  \STATE Compute spectral decomposition \(A_{\rm{p}}=Z\Lambda Z\herm\),
  where \(\Lambda\) has diagonals with ascending magnitudes.
%  \STATE \RMIS{\([C_X,C_{\rm{p}}]={\tt{HLtrick}}(Z)\)}{Set \(C_X\gets Z(:,1:\nnow)\) and use Hetmaniuk--Lehoucq trick~\cite{DSYG2018} to generate \(C_P\in\mathbb{C}^{n\times\nnow}\) such that \(C_P\herm C_P=I\), \(C_P\herm C_X=0\) and \(\Span\{X,SC_X\}=\Span\{SC_P,SC_X\}\)}.
%  \STATE \(X\gets SC_X\), \(P\gets SC_P\).
  \STATE \RMIS{\([C_X,C_{\rm{p}}]={\tt{HLtrick}}(Z)\)}
    {\([X,P]={\tt{HLtrick}}(S,Z)\) \quad \textsl{\% The improved
    Hetmaniuk--Lehoucq trick}}
  \IF{\(\mathtt{ifshrink()}\)}
    \STATE \(\Sdrop\gets [X(:,\nes+1:{\tt{end}}),P(:,\nes+1:{\tt{end}})]\).
    \STATE \(\Lambda\gets\Lambda(1:\nes\RMIS{:}{,}1:\nes)\), \(X\gets X(:,1:\nes)\), \(P\gets P(:,1:\nes)\).
    \RMIS{}{\STATE \(\nnow\gets\nes\).}
  \ENDIF
\ENDFOR
\end{algorithmic}
\end{algorithm}

\subsection{TraceMIN}
The trace minimization (TraceMIN) algorithm~\cite{KSS2013,SW1982} is 
\RMIS{based on
Fan's trace minimization principle
\[
\sum_{i=1}^\nev\lambda_i
=\min_{\substack{X\in\mathbb{C}^{n\times\nev} \\ X\herm X=I}}
\trace(X\herm AX),
\]
where \(\lambda_1\leq\lambda_2\leq\dotsb\leq\lambda_n\) are the eigenvalues of \(A\).}
{designed to compute a few eigenpairs with smallest magnitudes for Hermitian matrices.}
\RMIS{
If one can sequentially take \(X^{(j+1)}\) such that
\(\trace(X^{(j+1)*} AX^{(j+1)})<\trace(X^{(j)*} AX^{(j)})\),
\(\Span\{X^{(j+1)}\}\) serves as a better approximation to the desired invariant
subspace compared to \(\Span\{X^{(j)}\}\).
To this end, 
}{
In each iteration, }the TraceMIN algorithm \RMIS{constructs}{updates \(X^{(j+1)}\)} with
\(X^{(j+1)}=X^{(j)}-\Delta^{(j)}\), where \(\Delta^{(j)}\) is the solution of
\begin{equation}
\label{eq:tracminarg}
\argmin_{(X^{(j)})\herm\Delta^{(j)}=0}
\trace(X^{(j)}-\Delta^{(j)})\herm
A(X^{(j)}-\Delta^{(j)}).
\end{equation}
In the classical implementation of the TraceMIN algorithm~\cite{SW1982},
the KKT conditions are employed to solve the minimization
problem~\eqref{eq:tracminarg}.
Also in~\cite{SW1982}, it has been shown that~\eqref{eq:tracminarg} can be
converted to
\[
P_X^{(j)}AP_X^{(j)}\Delta^{(j)}=P_X^{(j)}AX^{(j)},
\qquad P_X=I-X^{(j)}(X^{(j)})\pinv.
\]

One of the highlights of the TraceMIN algorithm is that
\(P_X^{(j)}AP_X^{(j)}\) does not need to be solved accurately\RMIS{ ---}{.}
\RMIS{}{In other words, }the computational cost can be reduced by using an
inexact linear solver\RMIS{.}{,
which is similar to that of the Jacobi--Davidson algorithm~\cite{HJ2023,SV2000}.}
In Section~\ref{sec:numexp}, we employ the conjugate gradient (CG)
algorithm with only five iterations on solving the linear systems\RMIS{}{~\cite{KSS2013}}.
The classical TraceMIN algorithm is listed in Algorithm~\ref{alg:TM}.

\begin{algorithm}[!tb]
\caption{The TraceMIN algorithm with the shrink-and-expand technique.}
\label{alg:TM}
\begin{algorithmic}[1]
\REQUIRE A Hermitian matrix \(A\in\mathbb{C}^{n\times n}\),
an initial guess \(X^{(0)}\in\mathbb{C}^{n\times\nex}\), the number of
desired eigenpairs \(\nev\), and the number of vectors kept after the
shrinkage \(\nes\).
\ENSURE The diagonal matrix \(\Lambda\) contains the \RMIS{computed}{}
eigenvalues \RMIS{}{with smallest magnitudes},
and \(X\in\mathbb{C}^{n\times\nev}\) contains the corresponding
\RMIS{computed}{} eigenvectors.
\STATE Compute \(X\) by orthogonalizing \(X^{(0)}\)\RMIS{ by QR decomposition}{}.
\STATE \(A_{\rm{p}}\gets X\herm AX\).
\STATE Compute spectral decomposition \(A_{\rm{p}}=Z\Lambda Z\herm\),
where \(\Lambda\) has diagonals with ascending magnitudes.
\STATE \(X\gets XZ\).
\FOR{\(j=1,2,\dotsc\) until convergence}
  \STATE \(R\gets AX-X\Lambda\).
  \IF{\(\nev\) smallest eigenpairs have converged}
    \RETURN \(\Lambda\gets\Lambda(1:\nev,1:\nev)\), \(X\gets X(:,1:\nev)\).
  \ENDIF
  \STATE Solve \(P_XAP_X\Delta=P_XR\) with inexact linear solver, where \(P_X=I-X\herm X\).
  \STATE \(X\gets X-\Delta\).
  \IF{\(\mathtt{ifexpand()}\)}
    \STATE \(X\gets[X,\Xdrop]\).
  \ENDIF
  \STATE Orthogonalize \(X\)\RMIS{ by QR decomposition}{}.
  \STATE \(A_{\rm{p}}\gets X\herm AX\).
  \STATE Compute spectral decomposition \(A_{\rm{p}}=Z\Lambda Z\herm\),
  where \(\Lambda\) has diagonals with ascending magnitudes.
  \STATE \(X\gets XZ\).
  \IF{\(\mathtt{ifshrink()}\)}
    \STATE \(\Xdrop\gets X(:,\nes+1:{\tt{end}})\).
    \STATE \(\Lambda\gets\Lambda(1:\nes,1:\nes)\), \(X\gets X(:,1:\nes)\).
  \ENDIF
\ENDFOR
\end{algorithmic}
\end{algorithm}

\section{The shrink-and-expand technique}
\label{sec:algo}
\subsection{A general framework}
In Section~\ref{sec:intro}, we have briefly introduced the key idea of the
shrink-and-expand technique.
In the following, we provide a more detailed description.

For a block eigensolver, suppose \(X\in\mathbb{C}^{n\times\nex}\) consists of
the approximate eigenvectors.
\RMIS{The \emph{shrinkage} process is to drop a few columns from \(X\) so
that the reduced matrix \(X\in\mathbb{C}^{n\times\nes}\) (with
\(\nev\leq\nes<\nex\)) is used in subsequent iterations.
Conversely, the \emph{expansion} process is to append a few columns to \(X\)
so that the new \(X\) is \(n\times\nes\).}
{Then shrinkage and expansion are defined as follows.
\begin{itemize}
\item \emph{Shrinkage:} Dropping \(\nex-\nes\) (\(\nev\leq\nes<\nex\))
columns from \(X\in\mathbb{C}^{n\times\nex}\) to obtain a new
\(X\in\mathbb{C}^{n\times\nes}\).
And using this smaller \(X\) in subsequent iterations to save computational
cost.
\item \emph{Expansion:} Appending \(\nex-\nes\) linear independent vectors to
\(X\in\mathbb{C}^{n\times\nes}\) to obtain a new
\(X\in\mathbb{C}^{n\times\nex}\).
And using this larger \(X\) in subsequent iterations to increase convergence
rate.
\end{itemize}}
Typically, the vectors appended in expansion are those who dropped from the
last shrinkage.\RMIS{}{%
\footnote{Appending these vectors is of high efficiency. More choices will be
introduced in Section~\ref{sub-sec:exp-newvec}.}}
A flowchart to illustrate the shrink-and-expand technique in a block
eigensolver is shown in Figure~\ref{fig:flowchart}.

\begin{figure}[!tb]
\centering
\includegraphics[width=0.8\textwidth]{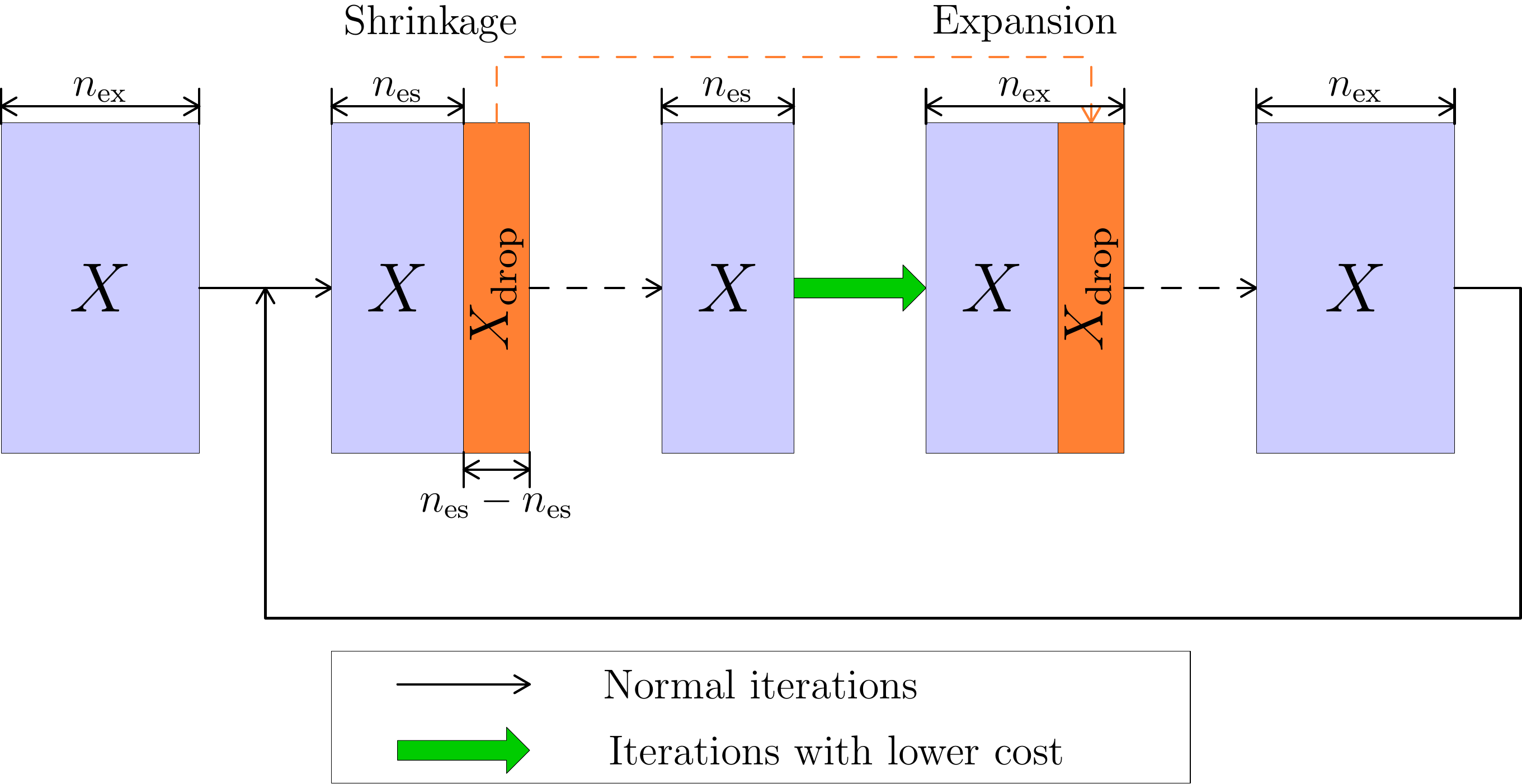}
\caption{The process of employing the shrink-and-expand technique in block
eigensolvers.
The size of \(X\) decreases from \(n\times\nex\) to \(n\times\nes\) after
shrinkage, allowing the iterations marked with the thick arrow to benefit from
the reduced cost.
To prevent the convergence rate from deteriorating, a larger block size is
restored periodically.}
\label{fig:flowchart}
\end{figure}

To employ the shrink-and-expand technique in block eigensolvers, we provide a
general framework in Algorithm~\ref{alg:framework}.
The framework is straightforwardly applied to SI, SD, and TraceMIN.
For the LOBPCG algorithm, there is a slight difference that, since both \(X\)
and \(P\) are stored in the memory, we drop/append columns for \(X\) and \(P\)
simultaneously; see Algorithm~\ref{alg:LOBPCG}.\RMIS{}{%
\footnote{The implementation of the shrink-and-expand technique is not unique.
Users may also choose to expand or shrink only \(X\).
We choose to adjust \(X\) and~\(P\) simultaneously for a simpler projection step.}}

\begin{algorithm}[!tb]
\caption{A block eigensolver employing the shrink-and-expand technique.}
\label{alg:framework}
\begin{algorithmic}[1]
\REQUIRE The matrix \(A\), the initial guess \(X^{(0)}\).
\ENSURE The approximate eigenpairs \((\Lambda,X)\).
\STATE Obtain an approximation \((\Lambda,X)\) by the Rayleigh--Ritz
process on \(X^{(0)}\).
\FOR{\(j=1\), \(2\), \(\dotsc\) until convergence}
  \STATE Check convergence.
  \STATE Update \(X\) (e.g., \(X\gets A^{-1}X\) for the SI algorithm).
  \IF{\(\mathtt{ifexpand()}\)}
    \STATE \(X\gets [X,\Xdrop]\).
  \ENDIF
  \STATE Construct the search space \(\mathcal S\) by \(X\)
(possibly, also by other information).
  \STATE Obtain the approximate eigenpairs \((\Lambda,X)\)
by the Rayleigh--Ritz process on \(\mathcal S\).
  \IF{\(\mathtt{ifshrink()}\)}
    \STATE \([X,\Xdrop]\gets X\).
  \ENDIF
\ENDFOR
\end{algorithmic}
\end{algorithm}

\RMIS{}{
We remark that ideally most calls to \texttt{ifshrink()}/\texttt{ifexpand()}
return \texttt{false}, meaning that shrinkage or expansion is employed only in
a few iterations, with most of iterations involving neither.
After a shrinkage step, the algorithm proceeds with as many as possible
cheaper iterations before an expansion is finally applied, thereby reducing
the overall computational expense.}

\subsection{Strategies for applying shrink-and-expand}
\label{subsec:strategies}
In Algorithms~\ref{alg:SI}--\ref{alg:TM}, we use \texttt{ifshrink()} and
\texttt{ifexpand()} to determine whether some columns need to be
dropped/appended.
In this subsection, we propose three concrete strategies.

\paragraph{The {\tt{fix}} strategy}
A straightforward strategy is to expand the block size every \(\je\)th
iterations to avoid the overall convergence rate decay, and then shrink the
block size in the \(\js\)th iteration after every expansion, where \(\je\)
and \(\js\) are prescribed by the user.
Specifically, we apply the shrinkage at the (\(i\cdot\je+\js\))th iteration and
the expansion at the (\(i\cdot\je\))th iteration, where \(i=0\), \(1\),
\(\dotsc\).
This strategy is referred to as {\tt{fix}} and is summarized in
Algorithm~\ref{alg:fix}.\RMIS{}{%
\footnote{We use the MATLAB code \(\texttt{mod}(a,b)\), which return \(0\)
if and only if \(a\) is divisible by \(b\).}}
Such a strategy can ensure a stable frequency for the shrinkage and the expansion,
and works well when some a priori estimates are available to determine
reasonable values for \(\js\) and \(\je\).

\begin{algorithm}[!tb]
\caption{The shrinkage and expansion strategy---{\tt{fix}}.}
\label{alg:fix}
\begin{algorithmic}[1]
\REQUIRE T\RMIS{}{he dimension of the current search space \(\nnow\),
the number of vectors kept after the shrinkage \(\nes\), t}he iteration now
\(j\), the period of taking expansion \(\je\), and the number of iterations
of taking shrinkage after the last expansion \(\js\).
\ENSURE \RMIS{}{A logical variable indicating \texttt{true} or \texttt{false}.}
\STATE \textbf{function} \texttt{ifshrink()}
\STATE \quad\textbf{return} \RMIS{whether \(j-\js\) is a multiple of \(\je\)}{whether \(\texttt{mod}(j-\js,\je)=0\)}
\STATE \textbf{end function}
\STATE \textbf{function} \texttt{ifexpand()}
\STATE \quad\textbf{return} \RMIS{whether \(j\) is a multiple of \(\je\)}
{whether \(\nnow=\nes\) \AND \(\texttt{mod}(j,\je)=0\)}
\STATE \textbf{end function}
\end{algorithmic}
\end{algorithm}

\paragraph{The {\tt{slope}} strategy}
The {\tt{fix}} strategy might not be the best choice when there is no
\RMIS{useful information}{information can be used} for choosing \(\js\) and
\(\je\).
In such case, if \(\js/\je\) is too large, it may result in an unsatisfactory
overall convergence rate.
Conversely, if \(\js/\je\) too small, we might miss out on potential
acceleration from the shrink-and-expand technique.
Therefore, we propose the {\tt{slope}} strategy aiming to use a small block
size for as many iterations as possible while avoiding significantly
reducing the convergence rate.
It is natural to adaptively determine the timing by the convergence rate
itself.

We define the slope of each iteration as
\begin{equation}
\label{eq:ckslope}
c^{(j)}=\log_{10}r^{(j-1)}-\log_{10}r^{(j)},\qquad j=1, 2, \dotsc,
\end{equation}
where \(r^{(j)}\) is the relative residual of the \(j\)th iteration.%
\footnote{The definition of \(r^{(j)}\) depends on specific algorithms
and also the user's settings.
We provide our definition of \(r^{(j)}\) in the experiments in
Section~\ref{sec:numexp}.}
If \(c^{(j)}\) is large, which means that the eigensolver is converging
rapidly, we keep the block size small;
if \(c^{(j)}\) becomes smaller, indicating that the convergence rate is
deteriorating, then an expansion is needed.
In practice, we record the maximum slope as \(\cmax\)\RMIS{ every time}{}
after the last shrinkage, and compare it with the slope of the current
iteration, \(c^{(j)}\).
If \RMIS{\(c^{(j)}<\mu\cmax\)}{\(\cmax/c^{(j)}>\mu\)},
where \RMIS{\(\mu<1\)}{\(\mu>1\)} is a tunable threshold, we take an
expansion right after this iteration because the convergence rate has
already deteriorated.
In principle, we can also use the slope to determine where to employ the
shrinkage.
However, one or two expanded iterations can typically restore the high
convergence rate.
Thus, using a fixed~\(\js\) for taking the next shrinkage suffices, just as
the {\tt{fix}} strategy does.
We summarize this strategy, named {\tt{slope}}, in Algorithm~\ref{alg:slope}.

\begin{algorithm}[!tb]
\caption{The shrinkage and expansion strategy---{\tt{slope}}/{\tt{slopek}}.}
\label{alg:slope}
\begin{algorithmic}[1]
\REQUIRE T\RMIS{}{he dimension of the current search space \(\nnow\), the number
of vectors kept after the shrinkage \(\nes\), t}he number of iterations since
the last expansion \(\jl\), the number of iterations of taking shrinkage
after the last expansion \(\js\), the slope of the current iteration \(c\),
the largest slope since the last shrinkage \(\cmax\), and the threshold of
the expansion \(\mu\).
\ENSURE \RMIS{}{A logical variable indicating \texttt{true} or \texttt{false}.}
\STATE \textbf{function} \texttt{ifshrink()}
\STATE \quad\textbf{return} \RMIS{whether \(\jl=\je\)}{whether \(\jl=\je\)}
\STATE \textbf{end function}
\STATE \textbf{function} \texttt{ifexpand()}
\STATE \quad\textbf{return} \RMIS{whether \(c>\mu\cmax\)}
{whether \(\nnow=\nes\) \AND \(\cmax/c>\mu\)}
\STATE \textbf{end function}
\end{algorithmic}
\end{algorithm}

\paragraph{The {\tt{slopek}} strategy}
In practice, we observe that the convergence history can be even more
complicated.
For example, in the SD algorithm the residual oscillates frequently, although
the overall trend is downward (see Figure~\ref{fig:SD3str} in
Section~\ref{sec:numexp}).
In this case, the {\tt{slope}} strategy can perform poorly since many
unnecessary expansion is applied whenever there is an increase of the
residual.
Therefore, we propose the {\tt{slopek}} strategy, where the average slope over
several iterations is considered instead of just one iteration.
We redefine \(c^{(j)}\) in Algorithm~\ref{alg:slope} as
\begin{equation}
\label{eq:ckslopek}
c^{(j)}=\frac{\log_{10}r^{(j-\jp)}-\log_{10}r^{(j)}}{\jp},\qquad j\ge\jp.
\end{equation}
This strategy takes the average slope of last \(\jp\) iterations, and is less
vulnerable to the influence of fluctuations.
We provide detailed numerical experiments in Section~\ref{sub-sec:numexp-3str}
to illustrate their performance under different applications.

\RMIS{In practice, it is recommended to run a few iterations of the eigensolver
before applying the shrink-and-expand technique.}
{Readers may notice that the definitions of the three strategies all assume
that the first shrinkage has already been performed.
However, we have not specified when the first shrinkage should be taken.
In practice, we recommend running a few iterations of the eigensolver
before applying the first shrinkage.}
This type of conservative strategy, which has been used in,
e.g.,~\cite{SAY2018}, allows the eigensolvers to achieve a reasonably stable
convergence rate, thereby ensuring the robustness of the proposed technique.
\RMIS{}{In our implementation, both a tolerance on the minimum iteration
\(\jwarm\) and a tolerance on the maximum residual \(\rwarm\) are employed.
The first shrinkage will be employed as soon as both \(j\ge\jwarm\) and
\(r^{(j)}\le\rwarm\) are satisfied.}

\section{\RMIS{Convergence theory on the subspace iteration}
{An illustrative example for the convergence of the shrink-and-expand technique}}
\label{sec:3dexample}
\RMIS{}
{A rigorous analysis on the acceleration effect of the shrink-and-expand
technique is quite complicated.
Therefore, in this section, we first present a simple \(3\times3\) example to
provide a quick understanding of the fundamental principles of the
shrink-and-expand technique.
A detailed analysis for the more general cases is deferred to
Section~\ref{sec:conv}.}

\RMIS{In this section, we use the SI algorithm with shift-and-invert as an
example to show properties of the shrink-and-expand technique.
There are two things to be explained in this section:}{}
\RMISBLOCK{
\begin{enumerate}
\item Why the convergence rate do not decay immediately after the shrinkage.
\item Why the convergence rate can be restored after the expansion.
\end{enumerate}
}{}

\RMIS{We first focus on the case that only the smallest eigenvalue of the
Hermitian positive definite matrix \(A\) is desired, and
\(\nex=\nev=1\).
The shift-and-invert technique is applied with a shift \(\sigma=0\), in other
words, \(A^{-1}x^{(j)}\) is employed in each iteration.}
{We consider employing the shrink-and-expand technique in the SI
algorithm (Algorithm~\ref{alg:SI}) with shift \(\zeta=0\).
Only the smallest (one) eigenpair of the Hermitian positive definite matrix
\(A\) is desired.}
To quantify convergence speed, we provided the following proposition, which
can be obtained from the proof of~\cite[Theorem 4.2.1]{Parlett1998}.
\begin{proposition}
\label{prop:palat}
Let \(A\in\mathbb{C}^{n\times n}\) be a Hermitian positive
definite matrix with normalized eigenpairs \((\lambda_1,v_1)\),
\((\lambda_2,v_2)\), \(\dotsc\), \((\lambda_n,v_n)\).
Assume that \(0<\lambda_1\leq\lambda_2\leq\dotsb\leq\lambda_n\), and the
initial guess \(x^{(0)}\in\mathbb{C}^n\) satisfying \(v_1\herm x^{(0)}\neq0\).
Then the convergence rate of the inverse power method can be represented in
\begin{equation}
\label{eq:rhopm}
\rho=\frac{\tan\theta^{(j+1)}}{\tan\theta^{(j)}}
=\lambda_1\bigl\lVert A^{-1}\check{x}^{(j)}\bigr\rVert_2,\qquad
\check{x}^{(j)}=\frac{x^{(j)}-v_1v_1\herm x ^{(j)}}{\lVert x^{(j)}-v_1v_1\herm x^{(j)}\rVert_2},
\end{equation}
where \(x^{(j)}\) is the approximate eigenvector of iteration \(j\), and
\(\theta^{(j)}\) represents the angle between \(x^{(j)}\) and
\(v_1\).
\end{proposition}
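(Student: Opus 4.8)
The plan is to split each iterate into its component along the dominant eigenvector $v_1$ and its component in the orthogonal complement, and then to track both pieces under a single step of the inverse power method, which sends $x^{(j)}$ to $x^{(j+1)}\propto A^{-1}x^{(j)}$. First I would write
\[
x^{(j)} = (v_1\herm x^{(j)})\,v_1 + w^{(j)},\qquad w^{(j)} = (I - v_1 v_1\herm)x^{(j)},
\]
so that, directly from the definition of the angle between $x^{(j)}$ and $v_1$, one has $\tan\theta^{(j)} = \norm{w^{(j)}}_2/\abs{v_1\herm x^{(j)}}$ (here $\norm{v_1}_2=1$ since the eigenpairs are normalized).

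The crucial step is to exploit the Hermitian structure: because $v_1$ is an eigenvector of $A$, both $\Span\{v_1\}$ and its orthogonal complement are invariant under $A^{-1}$. Applying $A^{-1}$ therefore scales the parallel part by $\lambda_1^{-1}$, since $A^{-1}v_1 = \lambda_1^{-1}v_1$, while $A^{-1}w^{(j)}$ stays orthogonal to $v_1$. Reading off the parallel and orthogonal parts of $A^{-1}x^{(j)}$—and observing that the proportionality constant in $x^{(j+1)}\propto A^{-1}x^{(j)}$ is irrelevant to an angle—gives
\[
\tan\theta^{(j+1)} = \frac{\lambda_1\norm{A^{-1}w^{(j)}}_2}{\abs{v_1\herm x^{(j)}}}.
\]
Forming the ratio then cancels the common factor $\abs{v_1\herm x^{(j)}}$, leaving $\tan\theta^{(j+1)}/\tan\theta^{(j)} = \lambda_1\norm{A^{-1}w^{(j)}}_2/\norm{w^{(j)}}_2$. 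Since $\check{x}^{(j)}$ is exactly the normalized orthogonal part $w^{(j)}/\norm{w^{(j)}}_2$, homogeneity of the norm turns this into $\lambda_1\norm{A^{-1}\check{x}^{(j)}}_2$, which is the claimed identity~\eqref{eq:rhopm}.

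I do not anticipate a genuine analytic obstacle here: the whole argument is an exact decomposition enabled by the eigenvector structure, so there are no estimates or limiting processes to control. The only point requiring care is ensuring that the angle remains well-defined, that is, that $v_1\herm x^{(j)}\neq 0$ for every $j$ so that each division above is legitimate. This follows by a short induction from the hypothesis $v_1\herm x^{(0)}\neq 0$ together with the fact that $A^{-1}$ multiplies the $v_1$-component by the nonzero scalar $\lambda_1^{-1}$ at each step.
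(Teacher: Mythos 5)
Your proof is correct and follows essentially the same route as the paper, which gives no independent argument but simply invokes the proof of Theorem 4.2.1 in Parlett; that proof rests on exactly your decomposition of \(x^{(j)}\) into its \(v_1\)-component and the orthogonal remainder \(w^{(j)}\), both of which are invariant under \(A^{-1}\) because \(A\) is Hermitian. The only point left tacit (in the proposition as well) is that the identity requires \(w^{(j)}\neq0\), i.e.\ \(x^{(j)}\) not already parallel to \(v_1\), so that \(\check{x}^{(j)}\) and the ratio of tangents are well defined.
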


By substituting \(\bigl\lVert A^{-1}\check{x}^{(j)}\bigr\rVert_2\le1/\lambda_2\)
into~\eqref{eq:rhopm}, we obtain \(\rho\le\lambda_1/\lambda_2\),
which is the most well-known bound for the convergence rate of the power
method.
However, such a substitution is actually the worst-case estimate, where the
influence of the direction of \(x^{(j)}\) is ignored.
In practice, \(\bigl\lVert A^{-1}\check{x}^{(j)}\bigr\rVert_2\) can be much smaller than
\(1/\lambda_2\), and the power method converges much more rapidly.
For example, if \(\termgk_1^2+\termgk_2^2=1\), the initial guess
\(x^{(0)}=v_1\termgk_1+v_n\termgk_2\) converges faster than
\(y^{(0)}=v_1\termgk_1+v_2\termgk_2\) since
\(\bigl\lVert A^{-1}\check{x}^{(0)}\bigr\rVert_2=1/\lambda_n\) is much smaller than
\(\bigl\lVert A^{-1}\check{y}^{(0)}\bigr\rVert_2=1/\lambda_2\).
Typically, we have
\[
\begin{aligned}
\tan\angle(x^{(j)},v_1)
&=\tan\angle(A^{-j}x^{(0)},v_1)
=\frac{\lambda_1^j\termgk_2}{\lambda_n^j\termgk_1}\\
&\qquad\le\frac{\lambda_1^j\termgk_2}{\lambda_2^j\termgk_1}
=\tan\angle(A^{-j}y^{(0)},v_1)
=\tan\angle(y^{(j)},v_1).
\end{aligned}
\]

Unfortunately, for \RMIS{}{a} general
\begin{equation*}
x^{(0)}=v_1\termgk_1+v_2\termgk_2+\cdots+v_n\termgk_n,
\end{equation*}
\(\check{x}^{(j)}\) gradually converges to \(v_2\) as
\[
\check{x}^{(j)}=v_2\frac{\termgk_2}{\lambda_2^j\gamma_j}+\cdots+v_n\frac{\termgk_n}{\lambda_n^j\gamma_j},\qquad
\gamma_j=\sqrt{\sum_{i=2}^n\frac{\termgk_i^2}{\lambda_{i}^{2k}}}.
\]
\RMIS{}
{
Using Proposition~\ref{prop:palat}, we have
\[
\begin{aligned}
\lim_{j\to\infty}\rho
&=\lim_{j\to\infty}\lambda_1\bigl\lVert A^{-1}\check{x}^{(j)}\bigr\rVert_2\\
&=\lim_{j\to\infty}\lambda_1\left\lVert\frac{1}{\lambda_2}\cdot v_2\frac{\termgk_2}{\lambda_2^j\gamma_j}
+\cdots+\frac{1}{\lambda_n}\cdot v_n\frac{\termgk_n}{\lambda_n^j\gamma_j}\right\rVert_2.
\end{aligned}
\]
Since \(\lim_{j\to\infty}\termgk_i/(\lambda_i^j\gamma_j)=0\) for \(i=3\),
\(\dotsc\), \(n\), and
\(\lim_{j\to\infty}\termgk_2/(\lambda_2^j\gamma_j)=1\), we obtain
\[
\lim_{j\to\infty}\rho
=\lambda_1\left\lVert\frac{1}{\lambda_2}\cdot v_2\right\rVert_2
=\frac{\lambda_1}{\lambda_2},
\]
which means that the convergence rate \(\rho\) will become larger and
eventually be close to \(\lambda_1/\lambda_2\).
}

\subsection{Why the convergence rate do not decline immediately after the
shrinkage}
Now we consider the case \(\nev=1\)\RMIS{ and \(\nex=2\)}{, \(\nex=2\), and \(\nes=1\)} aiming to
explain why the convergence rate remain higher even after a shrinkage
process.

\RMIS{Let \(X^{(j)}=\bigl[x_1^{(j)},x_2^{(j)}\bigr]\) arranged in ascending order
with respect to their Ritz values.
After a shrinkage process, the vector corresponding to \(\lambda_1\), named
\(x_1^{(j)}\), is selected for subsequent iterations.
However, when employing the Rayleigh--Ritz process, the other vector in
\(X^{(j)}\), named \(x_2^{(j)}\), approximates the second smallest
eigenvector, \(v_2\).
Since these two vectors must be orthogonal to each other, the component of
\(x_1^{(j)}\) on \(v_2\) will be significantly reduced by \(x_2^{(j)}\).
As we know from Proposition~\ref{prop:palat}, even if we select \(x_1^{(j)}\)
for the subsequent iterations, the convergence rate will still remain better
than \(\lambda_1/\lambda_2\) for a while.}{}

Let us use a concrete example to illustrate the phenomenon:
\begin{equation}
\label{eq:shrinkexp}
A=\bmat{1 & & \\ & 10 & \\ & & 100}, \qquad
X^{(0)}=
\bmat{1 & 1 \\
1 & 4 \\
1 & 2}.
\end{equation}
After one iteration of the SI algorithm, we obtain the approximate
eigenvectors
\[
X^{(1)}=\bigl[x_1^{(1)},x_2^{(1)}\bigr]
\approx
\bmat{
9.9998\times10^{-1}&2.1951\times10^{-3}\\
-2.4159\times10^{-3}&9.9944\times10^{-1}\\
6.5860\times10^{-3}&3.3329\times10^{-2}
},
\]
where \(x_1^{(1)}\) and \(x_2^{(1)}\) are already sorted in ascending order
with respect to their Ritz values.
Then, the approximation of \(v_1=[1,0,0]\herm\), which is \(x_1^{(1)}\), has a
significantly smaller component on \(v_2=[0,1,0]\herm\), compared to that on
\(v_3=[0,0,1]\herm\).
By Proposition~\ref{prop:palat}, the current convergence rate of \(x_1^{(1)}\)
is \(\rho=3.5696\times10^{-2}\), which is less than half of the asymptotic
convergence rate \(\lambda_1/\lambda_2=1\times10^{-1}\).
Therefore, even if we only use \(x_1^{(1)}\) for the subsequent iteration, the
convergence rate will still be better.

\subsection{Why the convergence rate can be restored after the expansion}
Similarly, if we begin with \RMIS{\(\nex=1\)}{a one dimensional search space
(to simulate the stage after a shrinkage)} and add another vector at
some iteration with the expansion technique, the Rayleigh--Ritz method helps
to eliminate the component on \(v_2\) and to restore the convergence rate.

\RMIS{Here, we also provide an example.}{}
With the same \(A\) as in~\eqref{eq:shrinkexp}, one step of power iteration
with \(X^{(0)}=[1,1,1]\herm\) (i.e., the first column of \(X^{(0)}\)
in~\eqref{eq:shrinkexp}) yields
\[
X^{(1)}=\frac{1}{\sqrt{10101}}\bmat{100 \\ 10 \\ 1}.
\]
It can be seen that the component on \(v_2\) is ten times larger than that of
\(v_1\), so that the convergence rate \(\rho=9.95\times10^{-2}\approx1\times10^{-1}=\lambda_1/\lambda_2\).
However, if we expand \(X^{(1)}\) with the second column \([1,4,2]\herm\)
of~\eqref{eq:shrinkexp} to make
\[
X^{(1)}\gets\bmat{
100/\sqrt{10101}&1\\
10/\sqrt{10101}&4\\
1/\sqrt{10101}&2
},
\]
then after another iteration we obtain
\[
X^{(2)}=\bigl[x_1^{(2)},x_2^{(2)}\bigr]
\approx
\bmat{
1.0000\times10^{0}&-2.0324\times10^{-4}\\
2.2386\times10^{-4}&9.9870\times10^{-1}\\
-3.9883\times10^{-4}&5.0959\times10^{-2}
},
\]
where \(x_1^{(2)}\) and \(x_2^{(2)}\) are, similarly, sorted by Ritz values
in ascending order.
It can be observed that, for \(x_1^{(2)}\), the component on \(v_2\),
\(2.2386\times10^{-4}\), is of the same magnitude as that on \(v_3\), which is
\(-3.9883\times10^{-4}\).
This brings the convergence rate of \(x_1^{(2)}\) down to
\RMIS{\(4.9716\times10^{-2}\)}{\(\rho=4.9716\times10^{-2}\),
which is, again, less than half of the asymptotic convergence rate
\(\lambda_1/\lambda_2=1\times10^{-1}\)}.
Thus, even if \(x_2^{(2)}\) is dropped, the subsequent iteration on
\(x_1^{(2)}\) still converges faster.
\RMIS{}{In other words, \(X^{(2)}\) now is ready for another shrinkage.}

The discussion above reveals the principle behind the shrink-and-expand
technique.
At the beginning, we perform iterations with a larger block size to
eliminate the component of \(x_1^{(j)}\) on \(v_2\).
Then, when the shrinkage is applied, the approximation of \(v_1\), i.e.,
\(x_1^{(j)}\), is selected for subsequent iterations.
Since the component of \(x_1^{(j)}\) on \(v_2\) is relatively small,
the convergence rate keeps better than \(\lambda_1/\lambda_2\) for a few
iterations.

Later, due to the nature of the SI algorithm,
the components will gradually concentrate to~\(v_2\) again,
where a decay in the convergence rate will be observed.
Then, another expansion can be employed to eliminate this component on \(v_2\)
and recover the convergence rate.

\RMIS{}
{Nevertheless, up to this point, we have not yet explained why introducing
new vectors to the search space can reduce the \(v_2\) component of
\(x_1^{(j)}\).
Also, Proposition~\ref{prop:palat} fails to explain the cases when
\(\nex>\nev>1\).
In the next section, we will analyze the full process for a more general case
to answer these questions.}

\RMIS{
Similar conclusions can be obtained when \(\nev>1\).
Let \(V=[v_1,\dotsc,v_n]\) and
\(X^{(j)}=\bigl[x_1^{(j)},x_2^{(j)},\dotsc,x_\nex^{(j)}\bigr]\) with \(\nex\ge\nev>1\).
Suppose \(X^{(j)}\) is written as}{}
\RMISBLOCK{
\[
X^{(j)}=V
\begin{bNiceMatrix}[first-row,last-col]
\nev & \nex-\nev & \\
\Xi_{1,1} & \Xi_{1,2} & ~\nev \\
\Xi_{2,1} & \Xi_{2,2} & ~\nex-\nev \\
\Xi_{3,1} & \Xi_{3,2} & ~n-\nex
\end{bNiceMatrix}.
\]
}{}
\RMIS{
Note that \(x_1^{(j)}\), \(\dotsc\), \(x_\nex^{(j)}\) are arranged in
ascending order with respect to their Ritz values.
Thus, the first \(\nev\) vectors,}{}
\RMISBLOCK{
\begin{equation}
\label{eq:nev}
V\bmat{\Xi_{1,1} \\ \Xi_{2,1} \\ \Xi_{3,1}},
\end{equation}
}{}
\RMIS{
are the approximate solution of the smallest eigenvectors.
Then, by [Theorems A.2 \& A.3]~\cite{LSS2024}, it can be proved that
\(\lVert \Xi_{2,1}\rVert_2\) is a lower-order term compared to
\(\lVert \Xi_{3,1}\rVert_2\).
This means that, when we increase the block size \(\nex\), the components
of~\eqref{eq:nev} will concentrate to larger eigenvalues.
Similar to Proposition~\ref{prop:palat},
in [Theorems A.2 \& A.4]~\cite{LSS2024}, we have proved that the converge is
faster for vectors whose components concentrate less on smaller eigenvectors.
}{}

\section{Detailed convergence analysis of the shrink-and-enlarge technique}
\label{sec:conv}
\ISALL
With the insight we obtained from the small example in
Section~\ref{sec:3dexample}, we are now ready to provide a detailed analysis
for the shrink-and-expand technique on SI in general cases.

To simplify the proof, only in this section, we use \textit{a different set of notation}.
We follow the definition of \(A\) in Proposition~\ref{prop:palat}, and let \(B=A^{-1}\)
and \(\sigma_i=\lambda_i^{-1}\).
Then \(B\in\mathbb C^{n\times n}\) is a Hermitian positive definite matrix with
normalized eigenpairs \((\sigma_1,v_1)\), \((\sigma_2,v_2)\), \(\dotsc\),
\((\sigma_n,v_n)\), where \(\sigma_1\geq\sigma_2\geq\dotsb\geq\sigma_n>0\).
To analyze the inverse iteration \(X\gets A^{-1}X\), we instead analyze the
power iteration \(X\gets BX\).

Suppose that we are performing the subspace iteration algorithm (without
shift-and-invert) to calculate the \(k\) largest eigenpairs of \(B\), and
the search space is spanned by a matrix \(X\in\mathbb C^{n\times k}\),
where \(X\herm X=I_k\).
% and \(I_k\in\mathbb{C}^{k\times k}\) is the identity.%
%\footnote{In this paper, \(I_i\) always denotes the identity matrix
%of dimension \(i\), and we will not explain on it further.}
Since the eigenvectors of \(B\) form an orthogonal basis of
\(\mathbb{C}^n\), \(X\) can be expressed in terms of the eigenvectors of
\(B\) as follows:
\begin{equation}
\label{eq:formx}
X=[V_k,V_{ l\backslash k},V_l^{\perp}]
\begin{bNiceMatrix}[first-row,last-col]
k & \\
X_k & ~k \\
X_{ l\backslash k} & ~l-k \\
X_l^{\perp} & ~n-l
\end{bNiceMatrix},
\end{equation}
where \(V_k=[v_1,\dotsc,v_k]\), \(V_{ l\backslash k}=[v_{k+1},\dotsc,v_{l}]\),
\(V_l^{\perp}=[v_{l+1},\dotsc,v_n]\).%
\footnote{For ease of understanding, readers may interpret \(l\) as the
size of the initial/expanded search space, \(\nex\), and \(k\) as the size
of shrunken subspace \(\nes\). However, it is important to note that the
proof in this section applies to a more general setting.}
For convenience, we further assume \(X_k\in\mathbb C^{k\times k}\) is
nonsingular.%
\footnote{This assumption is plausible in practice, since \(X_k\) converges to
a unitary matrix during the subspace iteration.}

To analyze the effectiveness of the shrink-and-expand technique, we first
establish the following theorem.
\begin{theorem}
\label{thm:rate}
Let \(B\in\mathbb C^{n\times n}\) be a Hermitian positive definite matrix with
normalized eigenpairs \((\sigma_1,v_1)\), \((\sigma_2,v_2)\), \(\dotsc\),
\((\sigma_n,v_n)\).
Assume that \(\sigma_1\geq\sigma_2\geq\dotsb\geq\sigma_n>0\), and \(X\)
is as defined in~\eqref{eq:formx}.
Then the convergence rate of one iteration of subspace iteration
algorithm reads
\begin{equation}
\label{eq:convergerate}
\frac{\tan\angle(V_k,BX)}
{\tan\angle(V_k,X)}
\leq\frac{\sigma_{l+1}}{\sigma_k}
+\frac{\sigma_{k+1}-\sigma_{ l+1}}{\sigma_k}
\frac{\bigl\lVert X_{ l\backslash k}
X_k^{-1}\bigr\rVert_2}
{\left\lVert\bmat{X_{ l\backslash k} \\
X_l^{\perp}}X_k^{-1}\right\rVert_2}.
\end{equation}
\end{theorem}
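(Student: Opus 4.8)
The plan is to reduce \emph{both} tangents in~\eqref{eq:convergerate} to operator norms of explicit block matrices, and then to control the resulting ratio by a single scalar inequality. I would begin with the denominator. Since $X_k$ is nonsingular, right-multiplying $X$ by $X_k^{-1}$ does not change $\Span(X)$, so from~\eqref{eq:formx} we may represent $\Span(X)$ as a graph over $\Span(V_k)$: $\Span(X)=\Span\bigl(V_k+[V_{l\backslash k},V_l^\perp]M\bigr)$, where $M=\bmat{X_{l\backslash k} \\ X_l^\perp}X_k^{-1}$ and the appended part is orthogonal to the columns of $V_k$. A standard principal-angle computation (orthonormalize the graph basis and take singular values of $V_k^*(\cdot)$) then shows that the tangents of the principal angles between $\Span(V_k)$ and $\Span(X)$ are exactly the singular values of $M$, whence $\tan\angle(V_k,X)=\|M\|_2$. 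This is precisely the denominator on the right of~\eqref{eq:convergerate}, and is the subspace analogue of the scalar identity in Proposition~\ref{prop:palat}.

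Next I would apply $B$. Writing $Bv_i=\sigma_i v_i$ blockwise gives $BX=V_k\Sigma_k X_k+V_{l\backslash k}\Sigma_{l\backslash k}X_{l\backslash k}+V_l^\perp\Sigma_l^\perp X_l^\perp$, so the $V_k$-block of $BX$ is $\Sigma_k X_k$, again nonsingular. Repeating the graph construction for $\Span(BX)$ yields $\tan\angle(V_k,BX)=\|G_B\|_2$ with $G_B=\bmat{\Sigma_{l\backslash k}X_{l\backslash k} \\ \Sigma_l^\perp X_l^\perp}(\Sigma_k X_k)^{-1}=D\,M\,\Sigma_k^{-1}$, where $D=\diag(\Sigma_{l\backslash k},\Sigma_l^\perp)=\diag(\sigma_{k+1},\dotsc,\sigma_n)$. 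Submultiplicativity together with $\|\Sigma_k^{-1}\|_2=1/\sigma_k$ then gives $\tan\angle(V_k,BX)\le\|DM\|_2/\sigma_k$, so it remains to bound $\|DM\|_2$ in terms of $\|M\|_2$ and $\|X_{l\backslash k}X_k^{-1}\|_2=\|M_1\|_2$, where $M_1=X_{l\backslash k}X_k^{-1}$ is the top block of $M$.

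The heart of the proof, and the step I expect to be the main obstacle, is the estimate of $\|DM\|_2$. Because the entries of $D$ straddle $\sigma_{l+1}$ (they are as large as $\sigma_{k+1}$ on the $V_{l\backslash k}$ block but at most $\sigma_{l+1}$ on the $V_l^\perp$ block), $DM$ does not factor as a fixed matrix times $M$, and the splitting $D=\sigma_{l+1}I+(D-\sigma_{l+1}I)$ has mixed-sign entries, so a direct triangle inequality is too lossy to recover~\eqref{eq:convergerate}. Instead I would argue vectorwise. For a unit vector $c$, set $M_2=X_l^\perp X_k^{-1}$, $a=\|M_1 c\|_2$, $b=\|M_2 c\|_2$, and use $\|\Sigma_{l\backslash k}\|_2=\sigma_{k+1}$, $\|\Sigma_l^\perp\|_2=\sigma_{l+1}$ to obtain $\|DMc\|_2^2=\|\Sigma_{l\backslash k}M_1 c\|_2^2+\|\Sigma_l^\perp M_2 c\|_2^2\le\sigma_{k+1}^2 a^2+\sigma_{l+1}^2 b^2$. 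The scalar inequality $\sqrt{\sigma_{k+1}^2 a^2+\sigma_{l+1}^2 b^2}\le\sigma_{l+1}\sqrt{a^2+b^2}+(\sigma_{k+1}-\sigma_{l+1})a$, which follows by squaring and using $\sqrt{a^2+b^2}\ge a$ (this is where $\sigma_{k+1}\ge\sigma_{l+1}$ is needed), then yields $\|DMc\|_2\le\sigma_{l+1}\|Mc\|_2+(\sigma_{k+1}-\sigma_{l+1})\|M_1 c\|_2$ for every unit $c$.

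Finally I would evaluate this bound at the unit vector $c^*$ attaining $\|DM\|_2$ and relax $\|Mc^*\|_2\le\|M\|_2$ and $\|M_1 c^*\|_2\le\|M_1\|_2$, giving $\|DM\|_2\le\sigma_{l+1}\|M\|_2+(\sigma_{k+1}-\sigma_{l+1})\|M_1\|_2$. Dividing by $\sigma_k\|M\|_2$ and recalling $\|M\|_2=\tan\angle(V_k,X)$ together with $\|M_1\|_2=\|X_{l\backslash k}X_k^{-1}\|_2$ reproduces exactly~\eqref{eq:convergerate}. The only points needing care are the nonsingularity of $\Sigma_k X_k$ (inherited from that of $X_k$) and the sign bookkeeping in the scalar inequality; everything else is routine.
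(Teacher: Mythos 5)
Your proof is correct and follows essentially the same skeleton as the paper's: both arguments identify \(\tan\angle(V_k,X)\) and \(\tan\angle(V_k,BX)\) with the spectral norms of the graph matrices \(M=\bmat{X_{l\backslash k}\\ X_l^{\perp}}X_k^{-1}\) and \(DM\Sigma_k^{-1}\), where \(D=\diag(\Sigma_{l\backslash k},\Sigma_l^{\perp})\), and then establish the identical intermediate estimate \(\lVert DM\Sigma_k^{-1}\rVert_2\le\frac{\sigma_{l+1}}{\sigma_k}\lVert M\rVert_2+\frac{\sigma_{k+1}-\sigma_{l+1}}{\sigma_k}\lVert X_{l\backslash k}X_k^{-1}\rVert_2\). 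The only genuine divergence is how that estimate is proved. You argue vectorwise through the scalar inequality \(\sqrt{\sigma_{k+1}^2a^2+\sigma_{l+1}^2b^2}\le\sigma_{l+1}\sqrt{a^2+b^2}+(\sigma_{k+1}-\sigma_{l+1})a\), which is valid: squaring shows the difference of the squares equals \(2\sigma_{l+1}(\sigma_{k+1}-\sigma_{l+1})a\bigl(\sqrt{a^2+b^2}-a\bigr)\ge0\). The paper instead splits the weight matrix as \(D=\diag(\sigma_{l+1}I_{l-k},\Sigma_l^{\perp})+\diag(\Sigma_{l\backslash k}-\sigma_{l+1}I_{l-k},0)\) and applies the triangle inequality; the first summand has norm exactly \(\sigma_{l+1}\) (since every entry of \(\Sigma_l^{\perp}\) is at most \(\sigma_{l+1}\)) and the second has norm \(\sigma_{k+1}-\sigma_{l+1}\) and annihilates the \(X_l^{\perp}\) block, so nothing is lost. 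Your remark that a ``direct triangle inequality is too lossy'' is therefore accurate only for the naive splitting \(D=\sigma_{l+1}I+(D-\sigma_{l+1}I)\) you considered, whose second piece fails to vanish on the \(X_l^{\perp}\) block; with the block splitting above the triangle inequality delivers the bound in two lines. Both routes are equally elementary and yield exactly~\eqref{eq:convergerate}; yours pays for an extra maximizer argument on the unit sphere but avoids having to spot the right splitting.
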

\begin{proof}
See Appendix~\ref{sec:proof}.
\end{proof}

If the matrix \(X\) is a randomly generated initial guess and \(l\ll n\),
the term
\[
\frac{\bigl\lVert X_{ l\backslash k}
X_k^{-1}\bigr\rVert_2}
{\left\lVert\bmat{X_{ l\backslash k} \\
X_l^{\perp}}X_k^{-1}\right\rVert_2}
\]
is typically much less than \(1\), making the right-hand side
of~\eqref{eq:convergerate} close to \(\sigma_{l+1}/\sigma_k\).
However, during the process of the SI algorithm, this term becomes larger as
\[
\frac{\bigl\lVert X_{ l\backslash k}
X_k^{-1}\bigr\rVert_2}
{\left\lVert\bmat{X_{ l\backslash k} \\
X_l^{\perp}}X_k^{-1}\right\rVert_2}
\rightarrow
\frac{\bigl\lVert\Sigmatwo^mX_{ l\backslash k}
X_k^{-1}\Sigmaone^{-m}\bigr\rVert_2}
{\left\lVert\bmat{\Sigmatwo^m X_{l\backslash k} \\
\Sigmathr^m\Xthr}X_k^{-1}\Sigmaone^{-m}\right\rVert_2}
\rightarrow
1,
\]
where \(m\) is the number of iterations and \(\Sigmaone=\diag\{\sigma_1,\dotsc,\sigma_k\}\),
\(\Sigmatwo=\diag\{\sigma_{k+1},\dotsc,\sigma_l\}\),
\(\Sigmathr=\diag\{\sigma_{l+1},\dotsc,\sigma_n\}\).
This explains why the residual curve of the subspace iteration algorithm is
usually observed to decrease rapidly at the first several iterations, but
goes smoother and finally shows a convergence rate of
\(\sigma_{k+1}/\sigma_k\).

The point of the shrink-and-enlarge technique is that, by expanding
\(X\) with some certain vectors \(\Xexp\), performing the Rayleigh--Ritz
process on \(\Span\{X,\Xexp\}\), and selecting \(k\) vectors from it,
we obtain a new \textit{\(k\)-dimension} search space \(\Span{\Xnew}\).
However, the term
\begin{equation}
\label{eq:newterm}
\frac{\bigl\lVert\Xnew_{ l\backslash k}\Xnew_k^{-1}\bigr\rVert_2}
{\left\lVert\bmat{\Xnew_{l\backslash k}\\
\Xnew_l^{\perp}}\Xnew_k^{-1}\right\rVert_2},
\end{equation}
is significantly smaller.
This helps maintain a high convergence rate even after reducing the subspace.

In the remainder of this section, we present the proof in the
following order:
\begin{enumerate}
\item Firstly, we clarify the structure of the expanded search
space \(\Span\{X,\Xexp\}\).
With some mild assumptions, we can provide an orthogonal basis \(Y\)
with a special structure.
\item Then, with this basis, the Rayleigh--Ritz process performs a
spectral decomposition on \(H=Y\herm BY\), say \(H=C\Theta C\herm\).
We can analyze the structure of \(C\) with the Davis--Kahan theorem.
\item In the end, we estimate the approximate eigenvectors \(YC\)
block by block to derive an estimation of the term~\eqref{eq:newterm}.
\end{enumerate}

\subsection{To find an orthogonal basis of the expanded search space}
Before looking into \(\Span\{X,\Xexp\}\), we first make some assumptions
here.
The \(\Xexp\) introduced should be close to the eigenvectors
\(V_{ l\backslash k}\) and orthogonal.
We represent it as
\begin{equation}
\label{eq:formxexp}
\Xexp
=[V_k,V_{ l\backslash k},V_l^{\perp}]\left(
\begin{bNiceMatrix}[first-row,last-col]
l-k & \\
0 & ~k \\
I_{l-k} & ~l-k \\
0 & ~n-l
\end{bNiceMatrix}
+\epsexp\cdot\Delta\right),
\end{equation}
where \(\lVert\Delta\rVert_2=1\) with
\[
\Delta=\bmat{\Deltaone\\\Deltatwo\\\Deltathr},\quad
\Deltaone\in\mathbb C^{k\times(l-k)},\quad
\Deltatwo\in\mathbb C^{(l-k)\times(l-k)},\quad
\Deltathr\in\mathbb C^{(n-l)\times(l-k)}.
\]
Then an orthogonal basis \(Y\) of \(\Span\{X,\Xexp\}\) can be constructed by
the following theorem.
\begin{theorem}
\label{thm:decomp}
Suppose \(X\) and \(\Xexp\) are of the form~\eqref{eq:formx}
and~\eqref{eq:formxexp}, respectively.
Then there exists an orthogonal basis of \(\Span\{X,\Xexp\}\), named
\(Y\in\mathbb C^{n\times l}\), such that
\(\Span\{Y\}=\Span\{X,\Xexp\}\), \(Y\herm Y=I_l\), and
\begin{equation}
\label{eq:fromy}
Y=V_l+[V_k,V_{ l\backslash k},V_l^{\perp}]
\begin{bNiceMatrix}[first-row,last-col]
k &  l-k & \\
E_{1,1} & E_{1,2} & ~k \\
E_{2,1} & E_{2,2} & ~l-k \\
E_{3,1} & E_{3,2} & ~n-l
\end{bNiceMatrix}
\end{equation}
with \(V_l = [V_k,V_{ l\backslash k}]\).
Moreover, if we define \(\bigl\lVert\Xthr\Xone^{-1}\bigr\rVert_2=\teta\)
and \(\bigl\lVert X\Xone^{-1}\bigr\rVert_2=\heta\), these \(E_{*,*}\)'s can be
bounded by
\[
\begin{aligned}
\lVert E_{1,1}\rVert_2&\leq\frac{1}{2}\teta^2+\epsexp(2\heta+\teta\heta+\frac{1}{2}\teta^2\heta)
+\epsexp^2(4\heta^2+\teta\heta^2)
+3\epsexp^3\heta^2,\\
\lVert E_{2,1}\rVert_2&\leq\epsexp(2\heta+\teta^2\heta)
+\epsexp^2(2\heta^2+2\teta\heta^2)+6\epsexp^3\heta^3,\\
\lVert E_{3,1}\rVert_2&\leq\teta+\frac{1}{2}\teta^3
+\epsexp(\heta+\teta\heta+\frac{3}{2}\teta^2\heta)
+\epsexp^2(\heta^2+4\teta\heta^2)+3\epsexp^3\heta^3,\\
\lVert E_{3,1}\rVert_2&\geq\teta-\frac{1}{2}\teta^3
-\epsexp(\heta+\teta\heta+\frac{3}{2}\teta^2\heta)
-\epsexp^2(\heta^2+4\teta\heta^2)-3\epsexp^3\heta^3,\\
\lVert E_{1,2}\rVert_2&\leq\epsexp,\\
\lVert E_{2,2}\rVert_2&\leq\epsexp,\\
\lVert E_{3,2}\rVert_2&\leq\epsexp.
\end{aligned}
\]
\end{theorem}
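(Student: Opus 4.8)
The plan is to construct $Y$ explicitly by a block Gram--Schmidt process in which the $\Xexp$-block is orthonormalized \emph{first}, and then to bound the six blocks $E_{*,*}$ by Taylor-expanding the two inverse square roots that appear, treating $\teta$, $\epsexp$, and the products $\epsexp\heta$ as the small quantities. Concretely, I would set $Y_2=\Xexp(\Xexp\herm\Xexp)^{-1/2}$, an orthonormal basis of $\Span\{\Xexp\}$, and $W=X\Xone^{-1}$, which spans $\Span\{X\}$ because $\Xone$ is nonsingular and, in the eigenbasis $[V_k,V_{ l\backslash k},V_l^{\perp}]$, has the convenient coordinate form $\bmat{I_k \\ \Xtwo\Xone^{-1} \\ \Xthr\Xone^{-1}}$. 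I then deflate $W$ against $Y_2$ via $\tilde W=W-Y_2(Y_2\herm W)$ and orthonormalize, $Y_1=\tilde W(\tilde W\herm\tilde W)^{-1/2}$, and set $Y=[Y_1,Y_2]$. Since $Y_2\herm\tilde W=Y_2\herm W-Y_2\herm Y_2Y_2\herm W=0$ by construction, the identities $Y\herm Y=I_l$ and $\Span\{Y\}=\Span\{X,\Xexp\}$ are immediate, so the whole content lies in the block estimates.

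For the second block column, substituting~\eqref{eq:formxexp} gives the exact identity $\Xexp\herm\Xexp=I_{l-k}+\epsexp(\Deltatwo+\Deltatwo\herm)+\epsexp^2\Delta\herm\Delta$, and I would expand $(\Xexp\herm\Xexp)^{-1/2}=I-\tfrac12\epsexp(\Deltatwo+\Deltatwo\herm)+O(\epsexp^2)$ using $(I+S)^{-1/2}=I-\tfrac12 S+\tfrac38 S^2-\cdots$ with an explicit remainder. Reading off the three coordinate blocks of $Y_2=\Xexp(\Xexp\herm\Xexp)^{-1/2}$ and using $\lVert\Delta\rVert_2=1$ (hence $\lVert\Deltaone\rVert_2,\lVert\Deltatwo\rVert_2,\lVert\Deltathr\rVert_2\le1$) yields $\lVert E_{1,2}\rVert_2,\lVert E_{2,2}\rVert_2,\lVert E_{3,2}\rVert_2\le\epsexp$; the reason these stay at first order (rather than involving $\heta$) is precisely that $\Xexp$ is orthonormalized on its own, before any coupling with $X$.

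The first block column is where the structure is built. Computing $Y_2\herm W$ from the coordinate forms shows its leading term is $\Xtwo\Xone^{-1}$ with an $O(\epsexp)$ correction; the deflation $\tilde W=W-Y_2(Y_2\herm W)$ therefore \emph{cancels} the $\Xtwo\Xone^{-1}$ component of the middle block, leaving a middle block of size $O(\epsexp\heta)$, a top block $I_k+O(\epsexp\heta)$, and a bottom block $\Xthr\Xone^{-1}+O(\epsexp\heta)$. This cancellation is the crux of why $\lVert E_{2,1}\rVert_2$ ends up only $O(\epsexp\heta)$ instead of $O(\heta)$. Expanding the second factor $(\tilde W\herm\tilde W)^{-1/2}$, whose leading deviation from $I_k$ is $-\tfrac12(\Xthr\Xone^{-1})\herm(\Xthr\Xone^{-1})$ of size $\tfrac12\teta^2$, and multiplying through, I collect the blocks and bound each factor with $\lVert\Xtwo\Xone^{-1}\rVert_2\le\heta$, $\lVert\Xthr\Xone^{-1}\rVert_2=\teta$, and $\lVert\Delta_*\rVert_2\le1$. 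This produces the upper bounds on $E_{1,1}$, $E_{2,1}$, $E_{3,1}$; the two-sided bound on $E_{3,1}$ follows from $E_{3,1}=\Xthr\Xone^{-1}+(\text{higher order})$ and the reverse triangle inequality, giving the lower bound $\teta-\tfrac12\teta^3-\cdots$.

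The main obstacle is the bookkeeping in the last step: the two inverse-square-root factors must be expanded to third order and all cross terms collected to recover the exact coefficients ($2\heta$, $\tfrac12\teta^2$, $\tfrac32\teta^2\heta$, $3\epsexp^3\heta^2$, $6\epsexp^3\heta^3$, and so on), while carefully tracking which factors are controlled by $\teta$, which by $\heta$, and which by $1$. A secondary point to address is the convergence of the binomial series, namely that $\teta$, $\epsexp$, and $\epsexp\heta$ are small enough that $\Xexp\herm\Xexp$ and $\tilde W\herm\tilde W$ lie within the radius of convergence; this is the implicit smallness regime in which the theorem is meant to hold, and I would state it as the standing assumption under which the remainder terms are absorbed into the cubic bounds.
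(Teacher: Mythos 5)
Your construction is essentially the paper's: project \(X\Xone^{-1}\) onto the orthogonal complement of \(\Span\{\Xexp\}\), orthonormalize via the inverse square root of the Gram matrix, and append \(\Xexp\) as the second block, with the two key points---the cancellation of the \(\Xtwo\Xone^{-1}\) middle block by the deflation (leaving only \(O(\epsexp\heta)\)) and the \(\tfrac12\teta^2\) leading term of \((\Xmid\herm\Xmid)^{-1/2}-I\)---both correctly identified. The only discrepancy is your preliminary factor \((\Xexp\herm\Xexp)^{-1/2}\): the theorem implicitly takes \(\Xexp\) to have orthonormal columns (otherwise the paper's \(Y=[\Xmid(\Xmid\herm\Xmid)^{-1/2},\Xexp]\) would not be orthogonal and the exact bounds \(\lVert E_{i,2}\rVert_2\le\epsexp\) would degrade to \(\epsexp(1-2\epsexp)^{-1/2}\)), so that factor is the identity and your \(Y_2\) coincides with \(\Xexp\).
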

\begin{proof}
See Appendix~\ref{sec:proof-decomp}.
\end{proof}
Note that when the subspace iteration algorithm is close to convergence, we
have \(\teta\rightarrow0\) and \(\heta\rightarrow1\).

\subsection{The result of the Rayleigh--Ritz process}
With the expression of \(Y\), we can now perform the Rayleigh--Ritz process.
Since the result of the Rayleigh--Ritz process does not depend on the choice
of orthogonal basis, we assume the approximate eigenvectors are
given by \(YC\), where \(C\) consists of the eigenvectors of the projected
matrix \(H=Y\herm BY\).

To understand the structure of \(C\), we rely on the following perturbation
result as shown in Theorem~\ref{thm:perturbation}.
The proof of this theorem is omitted here since it the same as that
of~\cite[Theorem~3]{LSS2024}.

\begin{theorem}
\label{thm:perturbation}
Let \(H=\Sigma+\delta H\in\mathbb C^{l\times l}\) be a Hermitian matrix with
spectral decomposition \(H=C\Theta C\herm\), where \(\Sigma\) and \(\Theta\)
are real diagonal matrices and \(C\) is unitary.
Partition \(\Sigma\), \(\Theta\) and \(\delta H\) into
\(\Sigma=\diag\set{\Sigma_k,\Sigma_{l\backslash k}}\),
\(\Theta=\diag\set{\Theta_k,\Theta_{l\backslash k}}\), and
\(\delta H=[\delta H_1,\delta H_2]\),
where \(\Sigma_k\), \(\Theta_k\in\mathbb{R}^{k\times k}\), and
\(\delta H_2\in\mathbb{C}^{l\times(l-k)}\).
Suppose
\[
\min\bigl(\spec(\Theta_k)\bigr)>\max\bigl(\spec(\Sigma_{l\backslash k})\bigr)+\alpha,
\]
where \(\alpha>0\) represents the gap between \(\spec(\Sigma_k)\) and
\(\spec(\Theta_{l\backslash k})\).
Then there exist unitary matrices \(C_1\in\mathbb C^{k\times k}\) and
\(C_2\in\mathbb C^{(l-k)\times(l-k)}\) satisfying
\[
C=\bmat{C_1 \\ & C_2}
+\bmat{\delta C_{1,1} & \delta C_{1,2} \\ \delta C_{2,1} & \delta C_{2,2}},
\qquad \lVert\delta C_{i,j}\rVert_2\leq
\begin{cases}
\ceta^2, & i=j,\\
\ceta, & i\neq j,
\end{cases}
\]
in which \(\ceta=\lVert\delta H_2\rVert_2/\alpha\).
\end{theorem}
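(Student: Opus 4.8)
The plan is to read the statement as a one-sided Davis--Kahan estimate and to obtain the block structure of \(C\) from a Sylvester equation together with the unitarity of \(C\). First I would split \(C=[C^{(1)},C^{(2)}]\), where the columns of \(C^{(1)}\in\mathbb C^{l\times k}\) are the eigenvectors of \(H\) belonging to \(\Theta_k\) and those of \(C^{(2)}\) belong to \(\Theta_{l\backslash k}\), and partition each factor conformally with the \(k\) versus \(l-k\) splitting, writing \(C^{(1)}=\bmat{C_{1,1}'\\ C_{2,1}'}\) and \(C^{(2)}=\bmat{C_{1,2}'\\ C_{2,2}'}\). With this notation the claimed decomposition simply asserts \(\delta C_{2,1}=C_{2,1}'\), \(\delta C_{1,2}=C_{1,2}'\), and that the diagonal blocks \(C_{1,1}',C_{2,2}'\) lie within \(\ceta^2\) of suitable unitaries.

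The core estimate is the bound on the lower-left block. Starting from \(HC^{(1)}=C^{(1)}\Theta_k\) and \(H=\Sigma+\delta H\), I would left-multiply by the projector onto the last \(l-k\) coordinates to obtain the Sylvester equation
\[
C_{2,1}'\Theta_k-\Sigma_{l\backslash k}C_{2,1}'=R,\qquad
R=\bmat{0 & I_{l-k}}\,\delta H\,C^{(1)}.
\]
Since \(\Theta_k\) and \(\Sigma_{l\backslash k}\) are real diagonal and the hypothesis gives \(\operatorname{dist}\bigl(\spec(\Theta_k),\spec(\Sigma_{l\backslash k})\bigr)>\alpha\), the Sylvester operator is invertible with \(\lVert C_{2,1}'\rVert_2\le\lVert R\rVert_2/\alpha\). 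The delicate point is to bound \(\lVert R\rVert_2\) by \(\lVert\delta H_2\rVert_2\) rather than by \(\lVert\delta H\rVert_2\): because \(C^{(1)}\) has orthonormal columns, \(\lVert R\rVert_2\) is at most the spectral norm of the bottom \(l-k\) rows of \(\delta H\), and the Hermitian symmetry of \(\delta H\) makes that norm equal to the norm of the last \(l-k\) columns, namely \(\lVert\delta H_2\rVert_2\). This yields \(\lVert\delta C_{2,1}\rVert_2=\lVert C_{2,1}'\rVert_2\le\ceta\).

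The remaining bounds follow from the unitarity of \(C\) via its CS decomposition. Because the off-diagonal blocks of a unitary \(2\times2\) block matrix share the same singular values, \(\lVert\delta C_{1,2}\rVert_2=\lVert C_{1,2}'\rVert_2=\lVert C_{2,1}'\rVert_2\le\ceta\), with no second gap condition needed. For the diagonal blocks, the singular values of \(C_{1,1}'\) are the cosines \(\cos\theta_i\) of the principal angles between the two subspaces, while \(\lVert C_{2,1}'\rVert_2=\max_i\sin\theta_i\le\ceta\); hence every \(\cos\theta_i\ge\sqrt{1-\ceta^2}>0\) and \(C_{1,1}'\) is nonsingular. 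Taking the polar decomposition \(C_{1,1}'=C_1M\) with \(C_1\) unitary and \(M\) Hermitian positive definite whose eigenvalues are the \(\cos\theta_i\), I obtain
\[
\lVert\delta C_{1,1}\rVert_2=\lVert C_{1,1}'-C_1\rVert_2=\lVert M-I\rVert_2
=1-\cos\theta_{\max}=\frac{\sin^2\theta_{\max}}{1+\cos\theta_{\max}}\le\ceta^2,
\]
and the identical argument applied to \(C_{2,2}'\) gives \(\lVert\delta C_{2,2}\rVert_2\le\ceta^2\).

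The main obstacle I anticipate is precisely the residual identity \(\lVert R\rVert_2\le\lVert\delta H_2\rVert_2\): it is the only place where the Hermitian structure of \(\delta H\) is indispensable, converting a statement about rows into one about the prescribed columns \(\delta H_2\). I would also take care to verify that the gap driving the Sylvester bound is exactly the one assumed to exceed \(\alpha\), and that transferring from \(C_{2,1}'\) to \(C_{1,2}'\) through the CS decomposition is what lets the single quantity \(\ceta=\lVert\delta H_2\rVert_2/\alpha\) control both off-diagonal blocks and, through the \(\sin\)--\(\cos\) relation, their quadratic counterparts on the diagonal.
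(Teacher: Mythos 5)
Your proposal is correct, and it is worth noting that the paper itself does not contain an in-text proof of this theorem: it is omitted with a citation to~\cite[Theorem~3]{LSS2024}. Your argument therefore supplies exactly the details the paper outsources, and it does so by the standard Davis--Kahan mechanism that such a citation invokes: the Sylvester equation \(C_{2,1}'\Theta_k-\Sigma_{l\backslash k}C_{2,1}'=R\) for the lower-left block, the Hermitian symmetry of \(\delta H\) to convert the bottom rows \([0,\,I_{l-k}]\,\delta H=\delta H_2\herm\) into the prescribed columns (so that \(\lVert R\rVert_2\le\lVert\delta H_2\rVert_2\), the crucial point you correctly isolate), the CS-decomposition identity \(\lVert C_{1,2}'\rVert_2=\lVert C_{2,1}'\rVert_2\) to dispense with a second gap condition, and polar decomposition with \(1-\cos\theta\le\sin^2\theta\) for the quadratic diagonal bounds. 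One point of rigor you should tighten: for the \emph{spectral-norm} Sylvester bound with constant \(1/\alpha\), it is not enough that \(\operatorname{dist}\bigl(\spec(\Theta_k),\spec(\Sigma_{l\backslash k})\bigr)>\alpha\), as you state --- with interleaved spectra the sharp spectral-norm constant degrades by a factor \(\pi/2\). What saves you is that the hypothesis gives the stronger \emph{one-sided} separation \(\min\bigl(\spec(\Theta_k)\bigr)>\max\bigl(\spec(\Sigma_{l\backslash k})\bigr)+\alpha\), under which the integral representation \(X=\int_0^\infty e^{t\Sigma_{l\backslash k}}R\,e^{-t\Theta_k}\md t\) yields \(\lVert X\rVert_2\le\lVert R\rVert_2/\alpha\) directly; you should invoke this (or the interval-separation form of Davis--Kahan) rather than the distance alone. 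A second, minor remark: your nonsingularity argument for \(C_{1,1}'\) implicitly assumes \(\ceta<1\); this is harmless, since the polar decomposition with a unitary factor exists even when \(C_{1,1}'\) is singular, and the bound \(\lVert M-I\rVert_2=1-\cos\theta_{\max}\le\sin^2\theta_{\max}\le\ceta^2\) goes through unchanged, so the edge case \(\ceta\ge1\) costs nothing.
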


Applying Theorem~\ref{thm:perturbation} to \(\delta H=Y\herm BY-\Sigma_l\),
and using the expression of \(Y\) given in Theorem~\ref{thm:decomp}, we derive
\[
\begin{aligned}
\lVert\delta H_2\rVert_2
&=\left\lVert(Y\herm BY-\Sigma_l)\bmat{0\\I_{l-k}}\right\rVert_2\\
&=\left\Vert\bmat{E_{2,1}\herm\Sigmatwo\\E_{2,2}\herm\Sigmatwo}
+\bmat{\Sigmaone E_{1,2}\\\Sigmatwo E_{2,2}}
+\bmat{E_{1,1}\herm\Sigmaone E_{1,2}
+E_{2,1}\herm\Sigmatwo E_{2,2}+E_{3,1}\herm\Sigmathr E_{3,2}\\
E_{1,2}\herm\Sigmaone E_{1,2}
+E_{2,2}\herm\Sigmatwo E_{2,2}+E_{3,2}\herm\Sigmathr E_{3,2}}
\right\Vert_2\\
&\le O\bigl(\sigma_1\epsexp(\heta+\teta\heta^2)\bigr).
\end{aligned}
\]
Assuming that \(\epsexp\ll\teta<1\), \(\heta=O(1)\),
\(\sigma_1/\alpha=O(1)\), we obtain
\[
\ceta=\frac{\lVert\delta H_2\rVert_2}{\alpha}=O(\epsexp).
\]
As a result, the eigenvectors of \(H\) satisfy
\begin{equation}
\label{eq:formc}
C=\bmat{C_1 \\ & C_2}
+\bmat{\delta C_{1,1} & \delta C_{1,2} \\ \delta C_{2,1} & \delta C_{2,2}},
\qquad \lVert\delta C_{i,j}\rVert_2=
\begin{cases}
O(\epsexp^2), & i=j,\\
O(\epsexp), & i\neq j,
\end{cases}
\end{equation}
where \(C_1\in\mathbb C^{k\times k}\) and
\(C_2\in\mathbb C^{(l-k)\times(l-k)}\) are unitary.

This result confirms that for a sufficiently small \(\epsexp\), the first \(k\)
and the last \(l-k\) eigenvectors obtained from the Rayleigh--Ritz process
remain, respectively, close to those of the original subspace.

\subsection{To estimate the convergence rate of \(\Xnew\)}
With all the preparation before, we can summarize the following estimation
on the convergence rate of \(\Xnew\).
\begin{theorem}
\label{thm:main}
Let \(B\in\mathbb C^{n\times n}\) be a Hermitian positive definite matrix with
normalized eigenpairs \((\sigma_1,v_1)\), \((\sigma_2,v_2)\), \(\dotsc\),
\((\sigma_n,v_n)\), where \(\sigma_1\geq\sigma_2\geq\dotsb\geq\sigma_n>0\).
The matrices \(X\in\mathbb{C}^{n\times k}\) and
\(\Xexp\in\mathbb{C}^{n\times (l-k)}\) are defined as~\eqref{eq:formx}
and~\eqref{eq:formxexp}, respectively.
Suppose that \(\epsexp\ll\teta<1\), \(\heta=O(1)\), \(\alpha>0\),
\(\sigma_1/\alpha=O(1)\), and \(\epsexp\ll (1-\teta^2)/2\), where
\(\epsexp\), \(\teta\), \(\heta\), and \(\alpha\) are defined
in~\eqref{eq:formxexp}, Theorems~\ref{thm:decomp} and~\ref{thm:perturbation}.

If we perform a Rayleigh--Ritz process on \(\Span\{X,\Xexp\}\) and
select the approximate eigenvectors corresponding to \(k\) largest
eigenvalues to form \(\Xnew\in\mathbb{C}^{n\times k}\), the convergence
rate of the subspace iteration algorithm on \(\Xnew\) is
\begin{equation}
\label{eq:main}
\frac{\tan\angle(V_k,B\Xnew)}
{\tan\angle(V_k,\Xnew)}
\leq\frac{\sigma_{l+1}}{\sigma_k}
+\frac{\sigma_{k+1}-\sigma_{ l+1}}{\sigma_k}
\cdot O(\epsexp).
\end{equation}
\end{theorem}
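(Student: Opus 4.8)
The plan is to apply Theorem~\ref{thm:rate} directly to the shrunken iterate $\Xnew$. Since the Rayleigh--Ritz approximate eigenvectors are $YC$ with $Y\herm Y=I_l$ and $C$ unitary, the selected block $\Xnew=YC(:,1:k)$ automatically satisfies $\Xnew\herm\Xnew=I_k$, so $\Xnew$ has the form required by~\eqref{eq:formx} as soon as its leading coordinate block $\Xnew_k$ is shown to be nonsingular (the standing assumption needed by Theorem~\ref{thm:rate}). Thus the whole task reduces to expressing $\Xnew$ in the eigenbasis $[V_k,V_{l\backslash k},V_l^{\perp}]$, reading off the three blocks $\Xnew_k$, $\Xnew_{l\backslash k}$, $\Xnew_l^{\perp}$, and bounding the second (correction) term on the right-hand side of~\eqref{eq:convergerate}.

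To carry this out, I would substitute the structured basis $Y$ from Theorem~\ref{thm:decomp} together with the perturbed eigenvector matrix $C$ from~\eqref{eq:formc}. Writing the first $k$ columns of $C$ as $\bigl[\begin{smallmatrix}C_1+\delta C_{1,1}\\ \delta C_{2,1}\end{smallmatrix}\bigr]$ and multiplying by the coordinate form of $Y$ gives
\begin{equation*}
\begin{aligned}
\Xnew_k &= (I_k+E_{1,1})(C_1+\delta C_{1,1})+E_{1,2}\,\delta C_{2,1},\\
\Xnew_{l\backslash k} &= E_{2,1}(C_1+\delta C_{1,1})+(I_{l-k}+E_{2,2})\,\delta C_{2,1},\\
\Xnew_l^{\perp} &= E_{3,1}(C_1+\delta C_{1,1})+E_{3,2}\,\delta C_{2,1}.
\end{aligned}
\end{equation*}
I would then insert the block bounds of Theorem~\ref{thm:decomp} and the estimates $\lVert\delta C_{i,i}\rVert_2=O(\epsexp^2)$, $\lVert\delta C_{i,j}\rVert_2=O(\epsexp)$ for $i\neq j$. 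Since $\lVert E_{1,1}\rVert_2\leq\tfrac12\teta^2+O(\epsexp)<1$ (this is exactly where the hypothesis $\epsexp\ll(1-\teta^2)/2$ secures the strict inequality with room to spare), $I_k+E_{1,1}$ is invertible, and a Neumann-series argument shows $\Xnew_k$ is nonsingular with $\lVert\Xnew_k\rVert_2=O(1)$ and $\lVert\Xnew_k^{-1}\rVert_2=O(1)$. The off-diagonal block collapses, $\lVert\Xnew_{l\backslash k}\rVert_2=O(\epsexp)$, because both $\lVert E_{2,1}\rVert_2$ and $\lVert\delta C_{2,1}\rVert_2$ are $O(\epsexp)$, whereas the deep block retains its size, $\lVert\Xnew_l^{\perp}\rVert_2=\teta+O(\teta^3)+O(\epsexp)$, by the two-sided bound on $\lVert E_{3,1}\rVert_2$.

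Finally I would feed these three estimates into~\eqref{eq:convergerate}. The numerator obeys $\lVert\Xnew_{l\backslash k}\Xnew_k^{-1}\rVert_2\leq\lVert\Xnew_{l\backslash k}\rVert_2\,\lVert\Xnew_k^{-1}\rVert_2=O(\epsexp)$, while the denominator is bounded below by $\lVert\Xnew_l^{\perp}\Xnew_k^{-1}\rVert_2\geq\lVert\Xnew_l^{\perp}\rVert_2/\lVert\Xnew_k\rVert_2$, which is of order $\teta$. Hence the correction term of~\eqref{eq:convergerate} is $O(\epsexp/\teta)$; under the standing assumptions $\epsexp\ll\teta$ with $\teta$, $\heta$, $\sigma_1/\alpha$ all treated as $O(1)$ quantities, the factor $\teta^{-1}$ is a constant and the correction is exactly the claimed $O(\epsexp)$, giving~\eqref{eq:main}. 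The main obstacle I anticipate is the denominator: one must certify that the Rayleigh--Ritz step does \emph{not} shrink the $V_l^{\perp}$ component of $\Xnew$ (i.e.\ that $\lVert\Xnew_l^{\perp}\rVert_2$ stays of order $\teta$ rather than collapsing like the $V_{l\backslash k}$ component), which is precisely what the \emph{lower} bound on $\lVert E_{3,1}\rVert_2$ in Theorem~\ref{thm:decomp} supplies; tied to this is keeping $\Xnew_k$ uniformly well-conditioned, the role of $\epsexp\ll(1-\teta^2)/2$. The remaining work is careful big-$O$ bookkeeping of the cross terms, which is routine.
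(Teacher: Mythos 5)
Your proposal is correct and follows essentially the same route as the paper's proof: expand \(\Xnew=YC\bigl[\begin{smallmatrix}I_k\\0\end{smallmatrix}\bigr]\) in the eigenbasis using Theorems~\ref{thm:decomp} and~\ref{thm:perturbation}, obtain exactly the three block formulas the paper derives, show \(\bigl\lVert\Xnew_{l\backslash k}\bigr\rVert_2=O(\epsexp)\) while the lower bound on \(\lVert E_{3,1}\rVert_2\) keeps \(\bigl\lVert\Xnew_l^{\perp}\bigr\rVert_2\) of order \(\teta\), and substitute into Theorem~\ref{thm:rate}. The only cosmetic difference is that you bound the denominator via \(\bigl\lVert\Xnew_l^{\perp}\bigr\rVert_2/\lVert\Xnew_k\rVert_2\) whereas the paper splits off \(C_1^{-1}\) and uses a resolvent bound on \(\Xnew_k^{-1}-C_1^{-1}\); both yield the same order-\(\teta\) lower bound.
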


\begin{proof}
See Appendix~\ref{sec:proof-main}.
\end{proof}

Since \(\epsexp\ll1\), we have proved that \(\Xnew\) can achieve a better
convergence rate even after a shrinkage step.

\begin{remark}
\label{remark:inverseiteration}
The conclusion obtained can be easily represented by \(A\) with the
shift-and-invert version of the SI algorithm.
Suppose we are using Algorithm~\ref{alg:SI} with shift \(\zeta=0\) and
\(\epsexp\), \(\Xnew\), \(V_k\), and \(A\) defined as before, the
convergence rate~\eqref{eq:main} becomes
\[
\frac{\tan\angle(V_k,A^{-1}\Xnew)}
{\tan\angle(V_k,\Xnew)}
\leq\frac{\lambda_k}{\lambda_{l+1}}
+\lambda_k\Bigl(\frac{1}{\lambda_{k+1}}-\frac{1}{\lambda_{l+1}}\Bigr)
\cdot O(\epsexp).
\]
\end{remark}

\begin{remark}
However, extending our proof to the general cases of SD, LOBPCG, and
TraceMIN is not a straightforward task.
We only provide some basic insights here.
\begin{itemize}
\item \textbf{LOBPCG/SD:} To the best of our knowledge, both SD and
LOBPCG share the theoretical worst-case convergence results of PINVIT as
outlined in~\cite{AKNOZ2017}.
It is important to note that PINVIT is equivalent to the SI algorithm with
shift-and-invert when \(A^{-1}\) is used as the preconditioner.
Consequently, the convergence properties for the shift-and-invert variant of
the SI algorithm discussed in Remark~\ref{remark:inverseiteration} are
applicable to PINVIT with the preconditioner~\(A^{-1}\).
For a more general version of PINVIT using a preconditioner that approximates
\(A^{-1}\), it can also be treated as a perturbed SI, and hence potentially
benefit from our shrink-and-expand approach.
\item \textbf{TraceMIN:} As it is shown in~\cite{KSS2013}, that if we use a
sparse direct method to solve the KKT conditions, the TraceMIN algorithm
is mathematically the same as the SI algorithm with shift-and-invert.
Therefore, the shrink-and-expand technique on general TraceMIN can be
understood as an SI with perturbation.
\end{itemize}

\end{remark}

\ISALLEND

\section{Numerical experiments}
\label{sec:numexp}
In this section we will illustrate the efficiency of the shrink-and-expand
technique on four eigensolvers in Section~\ref{sec:prelim}.
All experiments are implemented by MATLAB \RMIS{R2022b}{R2023b} and the
hardware is based on two 16-core Intel Xeon Gold~6226R~2.90~GHz CPUs and
1024~GB of main memory.

\subsection{Experiment settings}
In our experiments, we compute \(\nev\) smallest eigenvalues and the
corresponding eigenvectors of \RMIS{symmetric}{Hermitian positive definite}
matrices listed in Table~\ref{tab:testmatrices}.
These twelve test matrices are chosen from the SuiteSparse Matrix
Collection~\cite{DH2011}, and have been widely used by researchers for testing
large, sparse eigensolvers~\cite{DSYG2018,JZ2022}.
We also remark that some of them, e.g., {\tt{c-65}}, are quite
challenging~\cite{DSYG2018}, providing a more comprehensive assessment of
practical use.
To eliminate the influence of linear solvers, if the test matrix \(A\) is 
\emph{not} positive definite, a shift
\begin{equation}
\label{eq:shift}
\check A\gets A-1.05\cdot\lambda_1I
\end{equation}
will be applied to ensure all of the test matrices are positive
semi-definite, where \(\lambda_1\) represents the \RMIS{}{algebraically}
smallest eigenvalue of \(A\).
For simplicity, we refer \(\check A\) as \(A\) in the rest of this section.
\RMIS{}{Under this setting, all algorithms in Section~\ref{sec:prelim} target
the same set of eigenvalues of \(A\) --- the smallest ones, both algebraically
and in magnitudes.}

\begin{table}[tb!]
\centering
\caption{Information of test matrices.
The scalars \(n\) and \(\nnz(A)\) are the size and the number of nonzero
elements of the matrix, respectively, and the \(\nev\) is the number of
desired \RMIS{eigenparis}{eigenpairs}.}
\begin{tabular}{ccccc}
\hline
No. & Matrix \(A\) & \(n\) & \(\nnz(A)\) & \(\nev\) \\
\hline
1  & {\tt{bcsstm21}}      & \phantom{00}\(3,600\)   & \phantom{0000}\(3,600\)     & \(100\) \\
2  & {\tt{rail\_5177}}    & \phantom{00}\(5,177\)   & \phantom{000}\(35,185\)    & \(100\)  \\
3  & {\tt{Muu}}           & \phantom{00}\(7,102\)   & \phantom{00}\(170,134\)   & \(100\)  \\
4  & {\tt{fv1}}           & \phantom{00}\(9,604\)   & \phantom{000}\(85,264\)    & \(100\)    \\
5  & {\tt{shuttle\_eddy}} & \phantom{0}\(10,429\)  & \phantom{00}\(103,599\)   & \(104\) \\
6  & {\tt{barth5}}        & \phantom{0}\(15,606\)  & \phantom{000}\(61,484\)    & \(156\)   \\
7  & {\tt{Si5H12}}        & \phantom{0}\(19,896\)  & \phantom{00}\(738,598\)   & \(199\)    \\
8  & {\tt{mario001}}      & \phantom{0}\(38,434\)  & \phantom{00}\(204,912\)   & \(384\)   \\
9  & {\tt{c-65}}          & \phantom{0}\(48,066\)  & \phantom{00}\(360,428\)   & \(481\)     \\
10 & {\tt{Andrews}}       & \phantom{0}\(60,000\)  & \phantom{00}\(760,154\)   & \(500\)       \\
11 & {\tt{Ga3As3H12}}     & \phantom{0}\(61,349\)  & \(5,970,947\) & \(500\)     \\
12 & {\tt{Ga10As10H30}}   & \(113,081\)            & \(6,115,633\) & \(500\)                   \\
\hline
\end{tabular}
\label{tab:testmatrices}
\end{table}

In our experiments, we employ the convergence criterion for a single
eigenpair as
\begin{equation}
\label{eq:conv}
\lVert A\hat v_i-\hat v_i\hat\lambda_i\rVert_2\le
{\tt{tol}}\cdot\bigl(\lVert A\rVert_2\lVert \hat v_i\rVert_2+\lVert \hat v_i\rVert_2\lvert\hat \lambda_i\rvert\bigr),
\end{equation}
where \((\hat\lambda_i,\hat v_i)\) is the computed approximate eigenpair,
and the threshold \({\tt{tol}}\) is set to \(10^{-10}\) for all algorithms.
The overall relative residual of the \(j\)th iteration, \(r^{(j)}\), is
defined as
\begin{equation}
\label{eq:rk}
r^{(j)}=\max_{i=1,\dotsc,\nev}
\RMIS{
\frac{\lVert A\rVert_2\lVert \hat v_i\rVert_2+\lVert\hat v_i\rVert_2\lvert\hat\lambda_i\rvert}
{\lVert A\hat v_i-\hat v_i\hat\lambda_i\rVert_2}}
{\frac{\lVert A\hat v_i-\hat v_i\hat\lambda_i\rVert_2}
{\lVert A\rVert_2\lVert \hat v_i\rVert_2+\lVert\hat v_i\rVert_2\lvert\hat\lambda_i\rvert}}
,
\end{equation}
where \(\hat\lambda_1\), \(\hat\lambda_2\), \(\dotsc\) are sorted in ascending order
with respect to their magnitude.
Note that \(r^{(j)}\le{\tt{tol}}\) indicates that all the \(\nev\) smallest
eigenpairs satisfy the convergence criterion~\eqref{eq:conv}, which can be
regarded as a signal of convergence.
The residual~\eqref{eq:rk} will be used for estimating the convergence rate
in~\eqref{eq:ckslope} and~\eqref{eq:ckslopek}, and for plotting the
convergence history.

\RMIS{As noted in Section~\ref{subsec:strategies}, we impose a few warm-up
iterations before applying the first shrinkage.
Instead of a fixed number of iterations, we use an adaptive way to determine
the timing of the first shrinkage.
In our experiments, the first shrinkage will not be employed until the
minimum iteration \(j\ge5\) and maximum residual \(r^{(j)}\le10^{-4}\) are
reached.
}{}

\subsection{Overall performance}
In this subsection, comparing with the algorithms without using the
shrink-and-expand technique, we employ Algorithms~\ref{alg:SI}, \ref{alg:SD},
\ref{alg:LOBPCG}, and~\ref{alg:TM} with the three different shrink-and-expand
strategies discussed in Section~\ref{subsec:strategies} to compute the
eigenpairs of the test matrices outlined in Table~\ref{tab:testmatrices}.
\RMIS{}{Locking techniques are not equipped in these algorithms, with the
exception that LOBPCG employs the soft locking technique~\cite{DSYG2018}.
Furthermore, for fairness, we do not apply any preconditioning in SD and
LOBPCG (i.e., \(T=I\)).}

In all of our test examples, we compute \(1\%\) smallest (magnitude)
eigenpairs of test matrices, at most \(500\), at least \(100\).
When computing \(\nev\) smallest eigenpairs, we set the initial block size
\(\nex=1.5\cdot\nev\) for the LOBPCG algorithm and \(\nex=2\cdot\nev\) for
other algorithms, and set \(\nes=\nev+5\).%
\RMIS{}{\footnote{These choices are consistent with commonly used parameters
for these eigensolvers; see, e.g.,~\cite{KSS2013, KMS2023, SMU2019}.}}

\RMIS{
To enroll the three strategies in Algrirthm~\ref{alg:fix} and~\ref{alg:slope},
we set the period of taking expansion \(\je=12\), the number of iterations of
taking shrinkage after the last expansion \(\js=2\), and the threshold of the
expansion \(\mu=1.1\).}
{When enrolling the {\tt{slope}} strategies (Algorithm~\ref{alg:fix}), we set
the period of taking expansion \(\je=12\) and the number of iterations of
taking shrinkage after the last expansion \(\js=2\).
And for the {\tt{slope}} and {\tt{slopek}} strategies
(Algorithm~\ref{alg:slope}), we set the threshold of the expansion \(\mu=1.1\)
and the period for taking average \(\jp=10\).
The warming up parameters are set to \(\jwarm=5\) and \(\rwarm=10^{-4}\).}

The test results (runtime, total iterations, and acceleration rate) of
Algorithms~\ref{alg:SI}, \ref{alg:SD}, \ref{alg:LOBPCG}, and~\ref{alg:TM} can
be found in Tables~\ref{tab:SIexps}, \ref{tab:SDexps}, \ref{tab:LOBPCGexps},
and~\ref{tab:TMexps}, respectively.
To avoid abuse of computational resources, we impose a limit of \(3600\)
seconds;
any test that exceeds this time limit is terminated and marked with
``\(\infty\)'' in the tables.

For most cases, our technique can achieve an acceleration of \(20\%\) to
\(40\%\).
From the perspective of iterations, the application of the shrink-and-expand
technique results in an increase in the number of iterations by around
\(10\%\).
However, due to the cheaper cost per iteration, the overall runtime is
effectively reduced.

There are also some cases where the performance of our technique is not that
satisfactory.
For instance, when employing our technique to the SD algorithm to solve
{\tt{c-65}}, it results in a significant increase in the number of iterations;
see Table~\ref{tab:SDexps}.
This is because the convergence history is observed to be very sensitive to
block size.

The improvement brought about by the shrink-and-expand technique varies for
different algorithms as well.
Among all these algorithms, the LOBPCG algorithm shows the best performance
gain with our technique.
Typically, the LOBPCG algorithm spends a significant amount of time on
orthogonalization, which can be efficiently reduced by decreasing the block
size.
Additionally, the residual curve of some algorithms, such as the SD algorithm,
is fluctuating, which presents challenges for applying the shrinkage.
It can be seen in Table~\ref{tab:SDexps} that the efficiency varies for
different examples.

\RMIS{}
{Another problem that needs to be discussed here is whether the four algorithms still converge to the desired
eigenvalues after employing the shrink-and-expand technique.
In our experiments, this is true for SI, SD, and LOBPCG.
However, for TraceMIN, we observe that, regardless of whether the shrink-and-expand technique is used, a
small fraction (less than \(1\%\)) of the computed eigenvalues converged to slightly
larger ones.
This is likely caused by the 
employment of the inexact linear solver, which makes TraceMIN no longer
mathematically converge to the smallest magnitude eigenpairs.}

\begin{table}[tb!]
\centering
\caption{Runtime (in seconds) and iterations by the SI algorithm with the shrink-and-expand technique.}
\ifsimax
\scriptsize
\else
\small
\fi
\begin{NiceTabular}{c|cc|ccc|ccc|ccc}
\hline
& \multicolumn{2}{c}{w/o} & \multicolumn{3}{c}{{\tt{fix}}} & \multicolumn{3}{c}{{\tt{slope}}} & \multicolumn{3}{c}{{\tt{slopek}}}\\
No. & time & iter & time & iter & save & time & iter & save & time & iter & save\\
\hline
1  & \(0.8064\) & \(30 \) & \(0.5921\) & \(30 \) & \(27 \%\) & \(0.5249\) & \(30 \) & \(35 \%\) & \(0.5245\) & \(30 \) & \(35 \%\)   \\
2  & \(7.2   \) & \(125\) & \(5.522 \) & \(131\) & \(23 \%\) & \(5.239 \) & \(136\) & \(27 \%\) & \(5.037 \) & \(137\) & \(30 \%\)   \\
3  & \(5.445 \) & \(76 \) & \(4.636 \) & \(82 \) & \(15 \%\) & \(4.411 \) & \(84 \) & \(19 \%\) & \(4.423 \) & \(86 \) & \(19 \%\)   \\
4  & \(15.49 \) & \(113\) & \(11.22 \) & \(119\) & \(28 \%\) & \(11.32 \) & \(121\) & \(27 \%\) & \(11.17 \) & \(123\) & \(28 \%\)   \\
5  & \(26.43 \) & \(195\) & \(18.86 \) & \(205\) & \(29 \%\) & \(18.65 \) & \(216\) & \(29 \%\) & \(17.5  \) & \(219\) & \(34 \%\)   \\
6  & \(40.19 \) & \(115\) & \(28.6  \) & \(125\) & \(29 \%\) & \(27.34 \) & \(130\) & \(32 \%\) & \(26.42 \) & \(135\) & \(34 \%\)   \\
7  & \(161.4 \) & \(58 \)  & \(112   \) & \(61 \)  & \(31 \%\) & \(106.7 \) & \(62 \)  & \(34 \%\) & \(105.8 \) & \(63 \)  & \(34 \%\)  \\
8  & \(278.3 \) & \(114\) & \(194.8 \) & \(122\) & \(30 \%\) & \(190.3 \) & \(126\) & \(32 \%\) & \(188.3 \) & \(130\) & \(32 \%\)  \\
9  & \(2220  \) & \(538\) & \(2532  \) & \(1021\) & \(-14 \%\) & \(2692  \) & \(1148\) & \(-21 \%\) & \(2710  \) & \(1200\) & \(-22 \%\) \\
10 & \(1982  \) & \(79 \) & \(1342  \) & \(83 \)  & \(32 \%\) & \(1320  \) & \(85 \)  & \(33 \%\) & \(1328  \) & \(86 \)  & \(33 \%\)  \\
11 & \(1760  \) & \(46 \) & \(1269  \) & \(55 \)  & \(28 \%\) & \(1260  \) & \(52 \)  & \(28 \%\) & \(1362  \) & \(60 \)  & \(23 \%\)  \\
12& \(\infty\)   & -    & \(\infty\)  & -    & -    & \(\infty\)   & -    & -     & \(\infty\)   & -    & -         \\
\hline
\end{NiceTabular}\\
\label{tab:SIexps}
\end{table}

\begin{table}[tb!]
\centering
\caption{Runtime (in seconds) and iterations by the SD algorithm with the
shrink-and-expand technique.}
\ifsimax
\scriptsize
\else
\small
\fi
\begin{NiceTabular}{c|cc|ccc|ccc|ccc}
\hline
& \multicolumn{2}{c}{w/o} & \multicolumn{3}{c}{{\tt{fix}}} & \multicolumn{3}{c}{{\tt{slope}}} & \multicolumn{3}{c}{{\tt{slopek}}}\\
No. & time & iter & time & iter & save & time & iter & save & time & iter & save\\
\hline
1  & \(0.5407\)  & \(12 \) & \(0.349  \) & \(10 \) & \(35 \%\)  & \(0.4894 \) & \(12 \) & \(9   \%\)  & \(0.3708 \) & \(10 \) & \(31 \%\)  \\
2  & \(16.04 \)  & \(235\) & \(12.76  \) & \(247\) & \(20 \%\)  & \(15.64  \) & \(245\) & \(2   \%\)  & \(11.99  \) & \(249\) & \(25 \%\)  \\
3  & \(76.38 \)  & \(981\) & \(56.33  \) & \(1077\)& \(26 \%\)  & \(73.81  \) & \(1069\)& \(3   \%\)  & \(55.63  \) & \(1082\)& \(27 \%\)  \\
4  & \(21.83 \)  & \(274\) & \(22.59  \) & \(293 \)& \(-3 \%\)  & \(25.33  \) & \(289 \)& \(-16 \%\)  & \(21.99  \) & \(297 \)& \(-1 \%\)  \\
5  & \(131.4 \)  & \(527\) & \(82.28  \) & \(594 \)& \(37 \%\)  & \(150.7  \) & \(578 \)& \(-15 \%\)  & \(82.14  \) & \(600 \)& \(37 \%\)  \\
6  & \(936.3 \)  & \(1319\)& \(516.8  \) & \(1429\)& \(45 \%\)  & \(703    \) & \(1378\)& \(25  \%\)  & \(486.9  \) & \(1449\)& \(48 \%\)  \\
7  & \(358.6 \)  & \(360 \)& \(315.3  \) & \(384 \)& \(12 \%\)  & \(328.7  \) & \(379 \)& \(8   \%\)  & \(317.5  \) & \(391 \)& \(11 \%\)  \\
8  & \(3641  \)  & \(1143\)& \(2264   \) & \(1226\)& \(38 \%\)  & \(2798   \) & \(1208\)& \(23  \%\)  & \(2113   \) & \(1249\)& \(42 \%\)  \\
9  & \(2037  \)  & \(462 \)& \(1756   \) & \(550 \)& \(14 \%\)  & \(2069   \) & \(555 \)& \(-2  \%\)  & \(1736   \) & \(574 \)& \(15 \%\)  \\
10 & \(1644  \)  & \(273 \)& \(1179   \) & \(293 \)& \(28 \%\)  & \(1400   \) & \(289 \)& \(15  \%\)  & \(1093   \) & \(302 \)& \(34 \%\)  \\
11 & \(2481  \)  & \(379 \)& \(1699   \) & \(435 \)& \(32 \%\)  & \(2121   \) & \(408 \)& \(15  \%\)  & \(1623   \) & \(468 \)& \(35 \%\)  \\
12& \(\infty\)   & -    & \(\infty\)  & -    & -    & \(\infty\)   & -    & -     & \(\infty\)   & -    & -     \\
\hline
\end{NiceTabular}
\label{tab:SDexps}
\end{table}

\begin{table}[tb!]
\centering
\caption{Runtime (in seconds) and iterations by the LOBPCG algorithm with the
shrink-and-expand technique.
No shrink-and-expand technique was employed in {\tt{bcsstm21}},
since the number of iterations did not reach the threshold.}
\label{tab:LOBPCGexps}
\ifsimax
\scriptsize
\else
\small
\fi
\begin{NiceTabular}{c|cc|ccc|ccc|ccc}
\hline
& \multicolumn{2}{c}{w/o} & \multicolumn{3}{c}{{\tt{fix}}} & \multicolumn{3}{c}{{\tt{slope}}} & \multicolumn{3}{c}{{\tt{slopek}}}\\
No. & time & iter & time & iter & save & time & iter & save & time & iter & save\\
\hline
1  & \(0.1841\)  & \(3  \) & \(0.1629 \) & \(3  \) & \(12 \%\)  & \(0.2166 \) & \(3  \) & \(-18 \%\) & \(0.1709 \) & \(3  \) & \(7  \%\)   \\
2  & \(6.723 \)  & \(79 \) & \(5.469  \) & \(80 \) & \(19 \%\)  & \(6.12   \) & \(79 \) & \(9   \%\)  & \(5.493  \) & \(79 \) & \(18 \%\)  \\
3  & \(19.27 \)  & \(186\) & \(13.31  \) & \(166\) & \(31 \%\)  & \(15.93  \) & \(167\) & \(17  \%\)  & \(13.58  \) & \(165\) & \(30 \%\)  \\
4  & \(11.69 \)  & \(73 \) & \(8.916  \) & \(78 \) & \(24 \%\)  & \(9.424  \) & \(77 \) & \(19  \%\)  & \(9.138  \) & \(82 \) & \(22 \%\)  \\
5  & \(30.37 \)  & \(159\) & \(20.64  \) & \(146\) & \(32 \%\)  & \(30.37  \) & \(143\) & \(0   \%\)  & \(27.57  \) & \(149\) & \(9  \%\)  \\
6  & \(117.3 \)  & \(207\) & \(73.81  \) & \(191\) & \(37 \%\)  & \(77.14  \) & \(189\) & \(34  \%\)  & \(66.43  \) & \(187\) & \(43 \%\)  \\
7  & \(82.36 \)  & \(91 \) & \(76.35  \) & \(97 \) & \(7  \%\)  & \(82.41  \) & \(95 \) & \(0   \%\)  & \(77.74  \) & \(97 \) & \(6  \%\)  \\
8  & \(832.1 \)  & \(206\) & \(565.1  \) & \(189\) & \(32 \%\)  & \(625.8  \) & \(190\) & \(25  \%\)  & \(558.2  \) & \(191\) & \(33 \%\)  \\
9  & \(1468  \)  & \(184\) & \(849.5  \) & \(149\) & \(42 \%\)  & \(909.5  \) & \(137\) & \(38  \%\)  & \(859.4  \) & \(158\) & \(41 \%\)  \\
10 & \(604   \)  & \(90 \) & \(436.3  \) & \(91 \) & \(28 \%\)  & \(506.3  \) & \(93 \) & \(16  \%\)  & \(430.7  \) & \(93 \) & \(29 \%\)  \\
11 & \(746.1 \)  & \(103\) & \(565.4  \) & \(100\) & \(24 \%\)  & \(615.6  \) & \(102\) & \(17  \%\)  & \(605.8  \) & \(108\) & \(19 \%\)  \\
12 & \(1955  \)  & \(147\) & \(1411   \) & \(137\) & \(28 \%\)  & \(1651   \) & \(145\) & \(16  \%\)  & \(1611   \) & \(154\) & \(18 \%\)  \\
\hline
\end{NiceTabular}
\end{table}

\begin{table}[tb!]
\centering
\caption{Runtime (in seconds) and iterations by the TraceMIN algorithm with
the shrink-and-expand technique.
For {\tt{mario001}}, the accelerate rate is marked as \(100\%\) because
the TraceMIN algorithm without the shrink-and-expand technique failed to
converge in \(3600\) seconds.}
\ifsimax
\scriptsize
\else
\small
\fi
\begin{NiceTabular}{c|cc|ccc|ccc|ccc}
\hline
& \multicolumn{2}{c}{w/o} & \multicolumn{3}{c}{{\tt{fix}}} & \multicolumn{3}{c}{{\tt{slope}}} & \multicolumn{3}{c}{{\tt{slopek}}}\\
No. & time & iter & time & iter & save & time & iter & save & time & iter & save\\
\hline
1  & \(3.897\)  & \(30 \) & \(2.39  \) & \(30 \) & \(39 \%\) & \(2.272 \) & \(30 \) & \(42 \%\) & \(2.251 \) & \(30 \) & \(42 \%\)   \\
2  & \(45.21\)  & \(123\) & \(34.25 \) & \(129\) & \(24 \%\) & \(33.6  \) & \(132\) & \(26 \%\) & \(33.34 \) & \(134\) & \(26 \%\)   \\
3  & \(46.97\)  & \(83 \) & \(35.89 \) & \(89 \) & \(24 \%\) & \(34.73 \) & \(89 \) & \(26 \%\) & \(35.33 \) & \(93 \) & \(25 \%\)   \\
4  & \(67.19\)  & \(111\) & \(48.61 \) & \(116\) & \(28 \%\) & \(48.19 \) & \(118\) & \(28 \%\) & \(47.36 \) & \(118\) & \(30 \%\)   \\
5  & \(135.9\)  & \(197\) & \(96.22 \) & \(208\) & \(29 \%\) & \(91.42 \) & \(218\) & \(33 \%\) & \(91.42 \) & \(222\) & \(33 \%\)   \\
6  & \(238.7\)  & \(119\) & \(150   \) & \(131\) & \(37 \%\) & \(143.5  \) & \(133\) & \(40 \%\) & \(141.2  \) & \(138\) & \(41 \%\)  \\
7  & \(450.9\)  & \(59 \) & \(302.3 \) & \(62 \) & \(33 \%\) & \(299.2 \) & \(63 \) & \(34 \%\) & \(296.8 \) & \(64 \) & \(34 \%\)   \\
8  & \(\infty\)  & -      & \(2289  \) & \(125\) & \(100\%\) & \(2220  \) & \(131\) & \(100\%\) & \(2233  \) & \(134\) & \(100\%\)  \\
9 &  \(\infty\)  & -   & \(\infty\)  & -   & -    & \(\infty\)  & -   & -    & \(\infty\)  & -   & -           \\
10&  \(\infty\)  & -   & \(\infty\)  & -   & -    & \(\infty\)  & -   & -    & \(\infty\)  & -   & -                \\
11&  \(\infty\)  & -   & \(\infty\)  & -   & -    & \(\infty\)  & -   & -    & \(\infty\)  & -   & -                 \\
12&  \(\infty\)  & -   & \(\infty\)  & -   & -    & \(\infty\)  & -   & -    & \(\infty\)  & -   & -                 \\
\hline
\end{NiceTabular}
\label{tab:TMexps}
\end{table}

\subsection{Performance of different strategies}
\label{sub-sec:numexp-3str}
We have introduced and employed three different strategies to determine the
timing of applying the shrink-and-expand technique.
Here, we will discuss the properties of each of these three strategies based
on the results of Tables~\ref{tab:SIexps}, \ref{tab:SDexps},
\ref{tab:LOBPCGexps}, and~\ref{tab:TMexps}.

We begin with the cases where the decrease of the residual is relatively
stable.
In Figure~\ref{fig:LOBPCG3str}, we present the convergence history of solving
the \RMIS{eigenparis}{eigenpairs} of {\tt{Muu}} by the LOBPCG algorithm, whose results are also
shown in Table~\ref{tab:LOBPCGexps}.
Observing from the figure, the {\tt{fix}} strategy, of course, exhibits a
consistent frequency in applying both shrinkage and expansion.
In contrast, the {\tt{slope}} strategy appears to be more sensitive,
particularly during the last half of the iterations.
The {\tt{slopek}} strategy tends to employ expansion less frequently,
with only four instances throughout the entire process.
Nevertheless, from the aspect of efficiency, the performance differences among
the three strategies are not significant.

\begin{figure}
\centering
\ifsimax
\includegraphics[height=5cm]{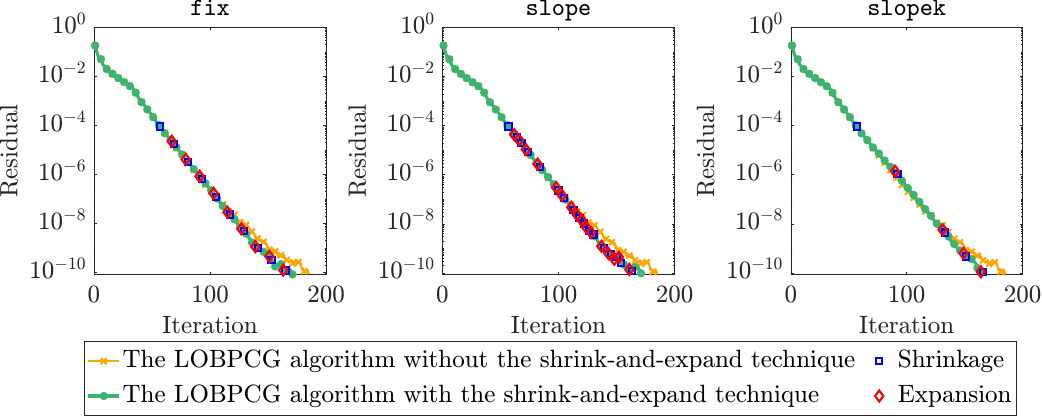}
\else
\includegraphics[height=5.8cm]{fig/LOBPCG3str.pdf}
\fi
\caption{Using the LOBPCG algorithm to compute \(100\) smallest
eigenpairs of the (shifted) {\tt{Muu}} matrix.
Three different shrinkage and expansion strategies---{\tt{fix}},
{\tt{slope}}, and {\tt{slopek}} are applied, respectively.
}
\label{fig:LOBPCG3str}
\end{figure}

However, if the convergence history is more fluctuating, the outcomes are
quite different.
In Figure~\ref{fig:SD3str}, we apply the SD algorithm on the same matrix,
{\tt{Muu}}.
Here, the residual curve experiences significant fluctuations, making it
difficult for the {\tt{slope}} strategy to appropriately determine the
timing for employing the expansion.
More specifically, an expansion will be applied whenever there is an
increase on the residual curve, since \RMIS{\(c_j<0<\mu\cmax\)}{\(c^{(j)}<0<\cmax\)};
see Figure~\ref{fig:SD3str} (bottom middle).
However, this is not necessary.
Even if the residual curve fluctuates all the time, it does not imply a
decay in the overall convergence rate.
In contrast, the overall trend remains stable.
In such cases, {\tt{slopek}} can avoid being affected by taking the average
slope of the residual curve.
From the third row of Table~\ref{tab:SDexps}, it can be found that the
{\tt{slopek}} strategy applies the shrinkage and the expansion more wisely,
effectively harnessing the potential of the algorithm, and achieving a
noticeably faster convergence.

\begin{figure}[!tb]
\centering
\ifsimax
\includegraphics[height=8cm]{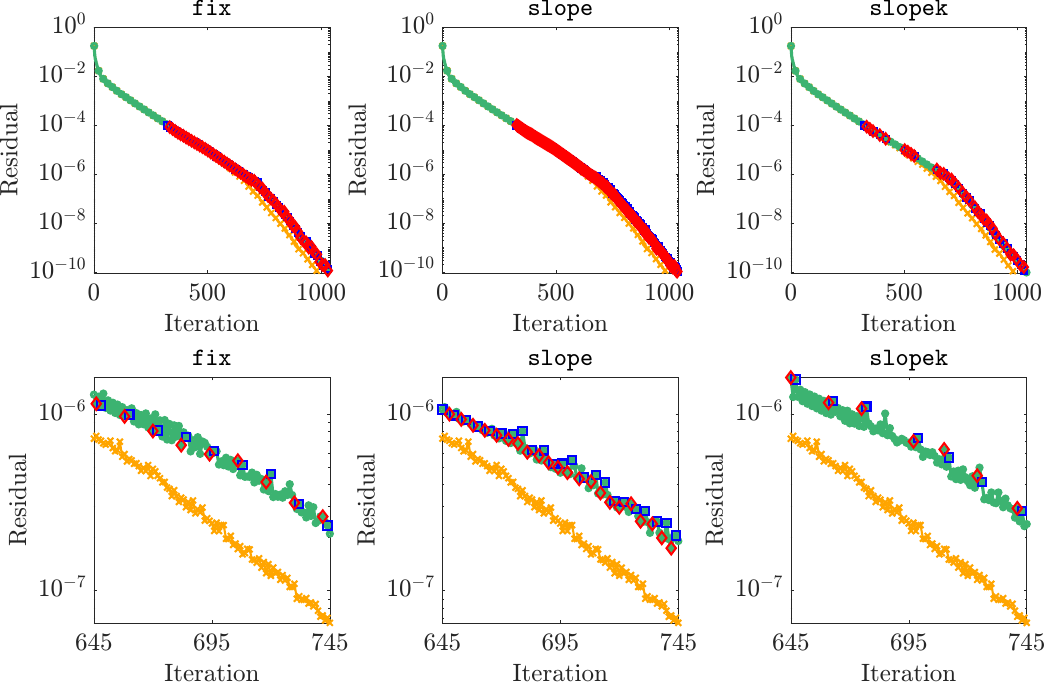}\\
\quad\\
\includegraphics[height=1cm]{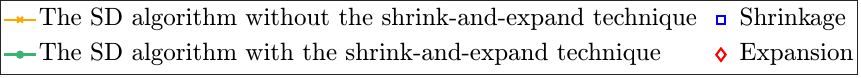}
\else
\includegraphics[height=9.5cm]{fig/SD3str1.pdf}\\
\quad\\
\includegraphics[height=1.2cm]{fig/SD3str2.pdf}
\fi
\caption{Using the SD algorithm to compute \(100\) smallest
eigenpairs of the (shifted) {\tt{Muu}} matrix.
Three different shrink-and-expand strategies---{\tt{fix}},
{\tt{slope}}, and {\tt{slopek}} are applied, respectively.
Both the shrinkage and the expansion are very dense in the {\tt{slope}} strategy,
making it hard to distinguish the curve.
Therefore, we have magnified the curve between iterations \RMIS{\(575\)--\(625\)}
{\(645\)--\(745\)} in the lower part to better illustrate the convergence history.}
\label{fig:SD3str}
\end{figure}

\subsection{Comparison on different block sizes}
\label{sub-sec:numexp-block}
To demonstrate that the acceleration of our shrink-and-expand technique is
not simply due to the change in block size, we perform the LOBPCG algorithm on
all test matrices with different block sizes.
The runtime and the number of iterations are listed in Table~\ref{tab:nexs}.
While the optimal block size depends on the spectral gap of the specific
matrices, \(1.5\cdot\nev\) is a reasonable choice, which achieves the fastest
convergence for most of the test examples.
Nevertheless, the performance is further enhanced when employing the
shrink-and-expand technique, which surpasses the performance of the algorithm
with the optimal block size alone on all examples except for {\tt{bcsstm21}}
(where no shrink-and-expand technique employed) and {\tt{shuttle\_eddy}}.

\begin{table}[tb!]
\centering
\caption{Runtime (in seconds) and iterations by the LOBPCG algorithm with
different block size.
Here, the block size is \(\nex=\alpha\cdot\nev\), where
\(\alpha\in\set{1,1.5,2,3,5}\);
for completeness, the result of \(\nex=\nes=\nev+5\) is also listed.
The LOBPCG algorithm with the shrink-and-expand technique and {\tt{fix}}
strategy is also attached at the last column, and is marked as `with SE'.
The maximum iteration is set to \(1000\), and the time limit is set to \(3600\)
seconds.
The examples exceed these limits are marked with \(\infty\).
We highlight the fastest runtime (without shrink-and-expand) of each matrix
in boldface.}
\ifsimax
\tiny
\else
\scriptsize
\fi
\begin{NiceTabular}{c|r@{\hspace{0.2em}}lr@{\hspace{0.2em}}lr@{\hspace{0.2em}}lr@{\hspace{0.2em}}lr@{\hspace{0.2em}}lr@{\hspace{0.2em}}l|r@{\hspace{0.2em}}l}
\hline
& \multicolumn{12}{c}{w/o SE} & \multicolumn{2}{c}{with SE}\\
No. & \multicolumn{2}{c}{\(\nev\)} &\multicolumn{2}{c}{\(\nev+5\) (\(=\nes\))} & \multicolumn{2}{c}{\(1.5\cdot\nev\)} & \multicolumn{2}{c}{\(2\cdot\nev\)} & \multicolumn{2}{c}{\(3\cdot\nev\)} & \multicolumn{2}{c}{\(5\cdot\nev\)} & \multicolumn{2}{c}{\(1.5\cdot\nev\)} \\
\hline
1&\(0.1157\)&\((3)\)&\(\textbf{0.1076}\)&\((3)\)&\(0.1577\)&\((3)\)&\(0.2002\)&\((3)\)&\(0.4137\)&\((3)\)&\(0.9523\)&\((3)\)  & \(0.1629 \) & \((3  )\)\\
2&\(14.09\)&\((615)\)&\(7.979\)&\((207)\)&\(\textbf{6.526}\)&\((74)\)&\(6.579\)&\((53)\)&\(10.04\)&\((39)\)&\(15.39\)&\((28)\)  & \(5.469  \) & \((80 )\)\\
3&\(19.02\)&\((482)\)&\(\textbf{18.3}\)&\((381)\)&\(18.88\)&\((186)\)&\(28.2\)&\((123)\)&\(44.99\)&\((89)\)&\(49.51\)&\((58)\)  & \(13.31  \) & \((166)\)\\
4&\(22.61\)&\((941)\)&\(14.45\)&\((352)\)&\(\textbf{11.79}\)&\((73)\)&\(21.73\)&\((53)\)&\(24.65\)&\((38)\)&\(33.01\)&\((28)\)  & \(8.916  \) & \((78 )\)\\
5&\(16.84\)&\((530)\)&\(\textbf{12.87}\)&\((239)\)&\(37.3\)&\((159)\)&\(42.73\)&\((106)\)&\(50.88\)&\((76)\)&\(63.83\)&\((52)\)  & \(20.64  \) & \((146)\)\\
6&\(-\)&\((\infty)\)&\(96.13\)&\((686)\)&\(\textbf{115.1}\)&\((212)\)&\(141.4\)&\((133)\)&\(149.4\)&\((94)\)&\(177.6\)&\((67)\)  & \(73.81  \) & \((191)\)\\
7&\(135.1\)&\((734)\)&\(135.8\)&\((273)\)&\(\textbf{84.36}\)&\((92)\)&\(93.41\)&\((67)\)&\(102\)&\((49)\)&\(145.3\)&\((36)\)  & \(76.35  \) & \((97 )\)\\
8&\(-\)&\((\infty)\)&\(-\)&\((\infty)\)&\(836.7\)&\((208)\)&\(800.1\)&\((137)\)&\(\textbf{793.6}\)&\((89)\)&\(1223\)&\((63)\)  & \(565.1  \) & \((189)\)\\
9&\(\textbf{940.3}\)&\((175)\)&\(950.6\)&\((175)\)&\(1147\)&\((137)\)&\(1461\)&\((125)\)&\(1866\)&\((96)\)&\(2952\)&\((68)\)  & \(849.5  \) & \((149)\)\\
10&\(-\)&\((\infty)\)&\(1224\)&\((707)\)&\(\textbf{603.1}\)&\((90)\)&\(906.2\)&\((64)\)&\(976.2\)&\((47)\)&\(1603\)&\((37)\)  & \(436.3  \) & \((91 )\)\\
11&\(\infty\)&\((-)\)&\(1378\)&\((572)\)&\(\textbf{763.2}\)&\((103)\)&\(1857\)&\((87)\)&\(2006\)&\((72)\)&\(2990\)&\((57)\)  & \(565.4  \) & \((100)\)\\
12&\(\infty\)&\((-)\)&\(3281\)&\((640)\)&\(\textbf{1997}\)&\((147)\)&\(\infty\)&\((-)\)&\(\infty\)&\((-)\)&\(\infty\)&\((-)\)  & \(1411   \) & \((137)\)\\
\hline
\end{NiceTabular}
\label{tab:nexs}
\end{table}

Furthermore, it also stresses the fact that, as we stated in
Section~\ref{sec:intro}, while increasing the block size can improve the
convergence rate, it may not necessarily decrease the total execution time.
In fact, the runtime decreases initially with increasing block size but then
starts to rise.
This is because increasing the block sizes results in a significant increase
in computational complexity, while the improvement in the convergence rate
may not be as rapid.
Moreover, for large-scale problems, the increased memory requirements of
larger block sizes are often unaffordable.
In contrast, our shrink-and-expand technique can achieve acceleration without
additional memory usage, making it advantageous for handling large-scale
problems.

\subsection{The vectors appended in the expansion}
\label{sub-sec:exp-newvec}
In the expansion, we have to append several vectors to expand the search
subspace.
In our implementation, the vectors discarded in the last shrinkage are reused
here.
A natural question is whether such a choice is efficient.
In Figure~\ref{fig:vectortype}, we illustrate the convergence history of SI
and LOBPCG, both with the {\tt{fix}} strategy, by appending different types of
vectors in the expansion.
We use three different types of vectors for comparison.
The `\(\Xdrop\)', as already illustrated in Figure~\ref{fig:flowchart}, uses
the vectors discarded in the last shrinkage.
The `Random' uses a randomly generated, normally distributed matrix.
And for the SI algorithm, it is also reasonable to update the saved vectors
\(\Xdrop\) by \(A^{-1}\Xdrop\), but without orthogonalization in each
iteration.
We name this method `Powered \(\Xdrop\)'.

\begin{figure}[!tb]
\centering
\ifsimax
\includegraphics[height=6.5cm]{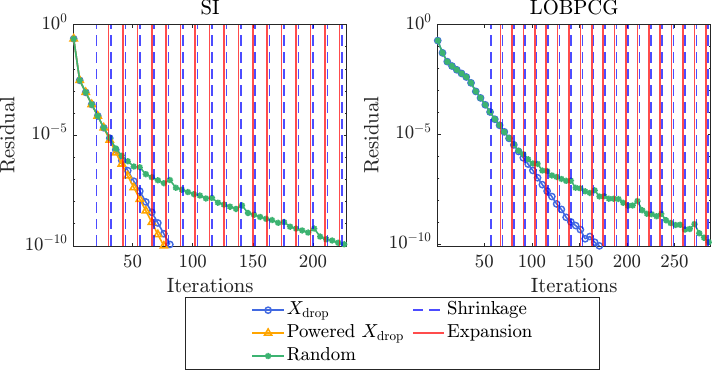}
\else
\includegraphics[height=7cm]{VectorTypes}
\fi
\caption{Use the SI (Left) algorithm and the LOBPCG (right) algorithm with
the shrink-and-expand technique to compute 100 smallest
eigenpairs of the (shifted) {\tt{Muu}} matrix.
Three different types of vectors are used in the expansion.
For `\(\Xdrop\)', the vectors dropped in the last shrinkage are used;
for `Powered \(\Xdrop\)', we use the same vectors as \(\Xdrop\),
but power it by \(A^{-1}\Xdrop\) in each iteration (without orthogonlization);
for `Random', randomly generated normal distributed matrices are used.
In both examples, we use the {\tt{fix}} strategy.
Since the timing of applying the shrink-and-expand technique are the same
for each curve, we use dashed line to represent the shrinkage and the
expansion.}
\label{fig:vectortype}
\end{figure}

\RMIS{It can be observed from Figure~\ref{fig:vectortype} that the reused vectors
perform apparently better than the randomly generated ones.
As for `Powered \(\Xdrop\)', its performance is slightly better than the
`\(\Xdrop\)'.
This experiment also reveals that the vectors appended in the
expansion can have a significant impact on the convergence of our
shrink-and-expand technique.}
{This confirms that, as proved in Section~\ref{sec:conv}, the performance
of the shrink-and-expand technique depends on how accurately the
expanded vectors approximate the eigenvectors \(u_{\nes+1}\), \(\dotsc\),
\(u_{\nex}\).
Since, in the `\(\Xdrop\)' case, the expanded vectors are already the
approximations of \(u_{\nes+1}\), \(\dotsc\), \(u_{\nex}\) dropped before,
their performance is naturally better than that of randomly chosen vectors.
And `Powered \(\Xdrop\)', whose expanded vectors are better approximations
to \(u_{\nes+1}\), \(\dotsc\), \(u_{\nex}\), naturally performs better than
`\(\Xdrop\)'.}
However, the additional cost associated with forming \(A^{-1}\Xdrop\) is the
price to pay.
Therefore, in practice `\(\Xdrop\)' appears to be the most efficient choice.

\section{Conclusions and outlook}
\ISALL
In this work, we propose an acceleration technique that can be applied to
various symmetric block eigensolvers---the shrink-and-expand technique.
Using SI, SD, LOBPCG, and TraceMIN as examples, we worked out all the
implementation details including the position and the timing of applying
the technique.
Both theoretical analysis and numerical experiments support the effectiveness
of the technique.
An acceleration of \(20\%\)--\(30\%\) is observed for most test examples,
demonstrating the potential of the shrink-and-expand technique.
Moreover, the consistency between the observed numerical results and the
theoretical prediction validates our analysis.

The key idea of the shrink-and-expand technique lies in the observation that
the convergence rate of a block eigensolver does not decline immediately after
reducing the block size.
Therefore, the eigensolver stays with nearly the same convergence rate but at
a lower cost.

This work represents a preliminary investigation into the shrink-and-expand
technique.
There is considerable scope for further development of the potential of the
technique.

Firstly, the technique can be applied to a wider range of problems.
In addition to the fixed block size eigensolvers, the shrink-and-expand
technique also has the potential to accelerate block Krylov-based methods.
\RMIS{}{In fact, we have already applied the shrink-and-expand technique to
block Lanczos~\cite{GU1977} and Chebyshev--Davidson~\cite{ZS2007,zhou2010}
algorithms, and also observed an acceleration of approximately \(20\%\).}
Though numerical experiments in this work focus mainly on the smallest magnitude
eigenpairs of Hermitian matrices, the shrink-and-expand
technique naturally carries over to other portion of the spectrum, and
hopefully even for some non-Hermitian matrices.

Secondly, the timing of applying shrinkage and expansion is worth a further
investigation.
For instance, note that the shrink-and-expand technique is not involved until
a threshold \(r^{(j)}\le\rwarm=10^{-4}\) is reached in Section~\ref{sec:numexp}.
However, the threshold here is compared with the residual of the \(\nev\)th
eigenpair.
This is a relatively conservative choice.
Because when the residual of \(\nev\)th eigenpair reaches \(10^{-4}\), other
eigenpairs already have much smaller residuals or even have converged.
In other words, the shrink-and-expand technique we applied only affected a
small amount of eigenpairs that have not converged yet, which may limit the
performance of this technique.
A more aggressive strategy would be to compare the threshold with the residual
of first eigenpair.
Furthermore, for most of test examples, employing a more aggressive
warming up threshold \(\rwarm=10^{-1}\) is also stable, while the acceleration
is, of course, better.

Finally, as demonstrated in both our theoretical analysis and numerical
experiments, the choice of vectors introduced during the expansion
significantly impacts the effectiveness of our technique.
One of the intriguing ideas is to employ mixed-precision arithmetic,
for instance, applying the powered \(\Xdrop\) while using reduced precision
arithmetic for updating \(A^{-1}\Xdrop\).
\ISALLEND

\section*{Acknowledgments}
We would like to thank Jingyu Liu and Jose E. Roman for constructive suggestions.
\RMIS{}{We are grateful to Ding Lu for his comments on the type of expansion
vectors, which made us aware of their significance and, therefore, helped us
refine our proof.
Furthermore, we would like to express our appreciate to the reviewers.
Their feedback helped us improve this work.}

Yuqi Liu and Meiyue Shao are partly supported by the National Natural Science Foundation
of China under grant No.~92370105.
Yuxin Ma is supported by the European Union (ERC, inEXASCALE, 101075632). Views and opinions expressed are those of the authors only and do not necessarily reflect those of the European Union or the European Research Council. Neither the European Union nor the granting authority can be held responsible for them.

\appendix
\ISALL
\section{Proofs of Theorem~\ref{thm:rate}}
\label{sec:proof}

\begin{proof}[Proof of Theorem~\ref{thm:rate}]
By the definition of the principal angle, we have
\[
\tan\angle(V_k,X)
=\left\lVert\bmat{X_{\ell\backslash k} \\
X_\ell^{\perp}}\Xone^{-1}\right\rVert_2,\qquad
\tan\angle(V_k,BX)
=\left\lVert\bmat{\Sigma_{\ell\backslash k}X_{\ell\backslash k}\\
\Sigma_\ell^{\perp}X_\ell^{\perp}}
\Xone^{-1}\Sigma_k^{-1} \right\rVert_2.
\]
Notice that
\begin{equation}
\label{eq:priangle}
\begin{aligned}
&\left\lVert\bmat{\Sigma_{\ell\backslash k}X_{\ell\backslash k} \\
\Sigma_\ell^{\perp}X_\ell^{\perp}}
\Xone^{-1}\Sigma_k^{-1}\right\rVert_2\\
&\qquad\le\left\lVert\bmat{\sigma_{\ell+1}X_{\ell\backslash k} \\
\Sigma_\ell^{\perp}X_\ell^{\perp}}
\Xone^{-1}\Sigma_k^{-1}\right\rVert_2
+\left\lVert\bmat{(\Sigma_{\ell\backslash k}-\sigma_{\ell+1}
I_{\ell-k})X_{\ell\backslash k} \\ 0}
\Xone^{-1}\Sigma_k^{-1}\right\rVert_2\\
&\qquad\le\frac{\sigma_{\ell+1}}{\sigma_k}\tan\angle(V_k,X)
+\frac{\sigma_{k+1}-\sigma_{\ell+1}}{\sigma_k}
\bigl\lVert X_{\ell\backslash k}
\Xone^{-1}\bigr\rVert_2.
\end{aligned}
\end{equation}
Dividing both sides of~\eqref{eq:priangle} by \(\tan\angle(V_k,X)\),
we obtain
\[
\frac{\tan\angle(V_k,BX)}
{\tan\angle(V_k,X)}
\leq\frac{\sigma_{\ell+1}}{\sigma_k}
+\frac{\sigma_{k+1}-\sigma_{\ell+1}}{\sigma_k}
\frac{\bigl\lVert X_{\ell\backslash k}
\Xone^{-1}\bigr\rVert_2}
{\left\lVert\bmat{X_{\ell\backslash k} \\
X_\ell^{\perp}}\Xone^{-1}\right\rVert_2}.
\qedhere
\]
\end{proof}

\section{Proofs of Theorem~\ref{thm:decomp}}
\label{sec:proof-decomp}
\begin{proof}[Proof of Theorem~\ref{thm:decomp}]
To obtain \(Y\), we aim to orthogonalize \(X\Xone^{-1}\) with respect to
\(\Xexp\).
This process involves two main steps: first eliminate the component of
\(\Xexp\) from \(X\Xone^{-1}\), and then orthogonalize the resulting matrix.

We first remove the component of \(\Xexp\) from \(X\Xone^{-1}\) by
\[
\begin{aligned}
\Xmid&=X\Xone^{-1}-\Xexp(\Xexp\herm X\Xone^{-1})\\
&=V\left(\bmat{I_k\\\Xtwo\Xone^{-1}\\\Xthr\Xone^{-1}}
-\bmat{\epsexp\cdot\Deltaone\\
I_{\ell-k}+\epsexp\cdot\Deltatwo\\\epsexp\cdot\Deltathr}
(\Xexp\herm X\Xone^{-1})\right)\\
&=V\bmat{I_k-\epsexp\cdot\Deltaone(\Xexp\herm X\Xone^{-1})\\
-\epsexp\cdot\Delta\herm X\Xone^{-1}
-\epsexp\cdot\Deltatwo(\Xexp\herm X\Xone^{-1})\\
\Xthr\Xone^{-1}-\epsexp\cdot\Deltathr(\Xexp\herm X\Xone^{-1})}.
\end{aligned}
\]
To ensure orthogonality, we multiply \(\Xmid\) by
\((\Xmid\herm\Xmid)^{-1/2}\), leading to the final expression for \(Y\) as
\[
Y=\bigl[\Xmid(\Xmid\herm\Xmid)^{-1/2}, \Xexp\bigr].
\]
It is not difficult to verify that when \(\Xmid\) is of full column rank,
\(Y\) is orthogonal and \(\Span\{Y\}=\Span\{X,\Xexp\}\).

We then introduce an auxiliary term \(F=(\Xmid\herm\Xmid)^{-1/2}-I_l\) to
reformulate \(Y\) in a more clear form
\[
Y=V_\ell+V\bmat{E_{1,1} & E_{1,2}\\
E_{2,1} & E_{2,2}\\
E_{3,1} & E_{3,2}},
\]
where the \(E_{*,*}\) blocks are given by
\[
\begin{aligned}
E_{1,1}&=-\epsexp\cdot\Deltaone(\Xexp\herm X\Xone^{-1})(I_l+F)+F,&
E_{1,2}&=\epsexp\cdot\Deltaone,\\
E_{2,1}&=-\bigl(\epsexp\cdot\Delta\herm X\Xone^{-1}
+\epsexp\cdot\Deltatwo(\Xexp\herm X\Xone^{-1})\bigr)(I_l+F), &
E_{2,2}&=\epsexp\cdot\Deltatwo,\\
E_{3,1}&=\bigl(\Xthr\Xone^{-1}-\epsexp\cdot
\Deltathr(\Xexp\herm X\Xone^{-1})\bigr)(I_l+F), &
E_{3,2}&=\epsexp\cdot\Deltathr.\\
\end{aligned}
\]
Remind that \(\lVert\Deltaone\rVert_2\le\lVert\Delta\rVert_2\),
\(\lVert\Deltatwo\rVert_2\le\lVert\Delta\rVert_2\),
\(\lVert\Deltathr\rVert_2\le\lVert\Delta\rVert_2\), and
\(\lVert\Delta\rVert_2=\lVert\Xexp\rVert_2=1\), it is not hard to obtain
\[
\begin{aligned}
\lVert E_{1,1}\rVert_2&\leq\epsexp\heta(1+\lVert F\rVert_2)+\lVert F\rVert_2,&
\lVert E_{1,2}\rVert_2&\leq\epsexp,\\
\lVert E_{2,1}\rVert_2&\leq2\epsexp\heta(1+\lVert F\rVert_2), &
\lVert E_{2,2}\rVert_2&\leq\epsexp,\\
\lVert E_{3,1}\rVert_2&\leq\teta+\teta\lVert F\rVert_2
+\epsexp\heta(1+\lVert F\rVert_2), &
\lVert E_{3,2}\rVert_2&\leq\epsexp,\\
\lVert E_{3,1}\rVert_2&\geq\teta-\teta\lVert F\rVert_2
-\epsexp\heta(1+\lVert F\rVert_2).\\
\end{aligned}
\]

The last thing to be done is to estimate \(\lVert F\rVert_2\).
Notice that
\[
\begin{aligned}
&\Xmid\herm\Xmid-I_l\\
&=-\epsexp\cdot\Deltaone(\Xexp\herm X\Xone^{-1})
-\epsexp\cdot(\Xexp\herm X\Xone^{-1})\herm\Deltaone\herm\\
&\quad+\epsexp^2\cdot\bigl(\Deltaone
(\Xexp\herm X\Xone^{-1})\bigr)\herm\Deltaone(\Xexp\herm X\Xone^{-1})\\
&\quad+\epsexp^2\cdot\bigl(\Delta\herm X\Xone^{-1}
+\Deltatwo(\Xexp\herm X\Xone^{-1})\bigr)\herm
\bigl(\Delta\herm X\Xone^{-1}+\Deltatwo(\Xexp\herm X\Xone^{-1})\bigr)\\
&\quad+(\Xthr\Xone^{-1})\herm\Xthr\Xone^{-1}\\
&\quad-\epsexp\cdot(\Xthr\Xone^{-1})\herm\Deltathr(\Xexp\herm X\Xone^{-1})
-\epsexp\cdot\bigl(\Deltathr
(\Xexp\herm X\Xone^{-1})\bigr)\herm\Xthr\Xone^{-1}\\
&\quad+\epsexp^2\cdot\bigl(\Deltathr
(\Xexp\herm X\Xone^{-1})\bigr)\herm\Deltathr(\Xexp\herm X\Xone^{-1}).\\
\end{aligned}
\]
Thus, when \(\epsexp\) is sufficiently small, we can assume \(\Xmid\herm\Xmid-I_l\)
is positive definite.
Using~\cite[Lemma 1]{LSS2024}, we obtain the bound
\[
\begin{aligned}
2\lVert F\rVert_2
&=2\bigl\lVert(\Xmid\herm\Xmid)^{-1/2}-I_l\bigr\rVert_2\\
&\le\lVert\Xmid\herm\Xmid-I_l\rVert_2\\
&\le\teta^2+2\epsexp(\teta\heta+\heta)+6\epsexp^2\heta^2.
\qedhere
\end{aligned}
\]
\end{proof}

\section{Proofs of Theorem~\ref{thm:main}}
\label{sec:proof-main}
\begin{proof}[Proof of Theorem~\ref{thm:main}]
Using the conclusion of Theorem~\ref{thm:decomp}, we first construct
an orthogonal basis \(Y\) of \(\Span\{X,\Xexp\}\) of the
form~\eqref{eq:fromy}.
Then, as proved in Theorem~\ref{thm:perturbation}, during the
Rayleigh--Ritz process, the first \(k\) columns of \(YC\) will be
selected as \(\Xnew\), where \(C\) is of the form~\eqref{eq:formc}.
This means that \(\Xnew\) can be expressed as
\[
\Xnew=YC\bmat{I_k\\0}=V\bmat{
(I_k+E_{1,1})(C_1+\delta C_{1,1})+E_{1,2}\delta C_{2,1}\\
E_{2,1}(C_1+\delta C_{1,1})+(I_{l-k}+E_{2,2})\delta C_{2,1}\\
E_{3,1}(C_1+\delta C_{1,1})+E_{3,2}\delta C_{2,1}
}
=V\bmat{\Xnew_k\\
\Xnew_{l\backslash k}\\
\Xnew_l^{\perp}},
\]
where, just as~\eqref{eq:formx}, we decompose \(\Xnew\) into three parts:
\(\Xnew_k\), \(\Xnew_{l\backslash k}\), and \(\Xnew_l^{\perp}\).

We begin with estimating the norm of \(\Xnew_k^{-1}\).
Note that
\begin{align*}
\Xnew_k^{-1}
&=\bigl((I_k+E_{1,1})(C_1+\delta C_{1,1})
+E_{1,2}\delta C_{2,1}\bigr)^{-1}\\
&=\bigl(C_1+\delta C_{1,1}+E_{1,1}(C_1+\delta C_{1,1})
+E_{1,2}\delta C_{2,1}\bigr)^{-1}.
\end{align*}
%This implies that
%\begin{equation} \label{eq:Xnewkinv}
%\Xnew_k^{-1} = C_1^{-1}+(\Xnew_k^{-1}-C_1^{-1}),
%\end{equation}
%where \(\bigl\lVert\Xnew_k^{-1}-C_1^{-1}\bigr\rVert_2 = \bigl\lVert C_1^{-1}-\bigl(C_1+\delta C_{1,1}+E_{1,1}(C_1+\delta C_{1,1})
%+E_{1,2}\delta C_{2,1}\bigr)^{-1}\bigr\rVert_2\).
The perturbation term satisfies
\[
\lVert\delta C_{1,1}+E_{1,1}(C_1+\delta C_{1,1})+E_{1,2}\delta C_{2,1}\rVert_2
\le\frac{1}{2}\teta^2+O(\epsexp).
\]
Suppose that this term is sufficiently small so that
\[
\frac{1}{2}\teta^2+O(\epsexp)<\frac{1}{2},
\]
Then \(\bigl\lVert\Xnew_k^{-1}-C_1^{-1}\bigr\rVert_2\) is bounded by
\begin{equation*} \label{eq:normXnewkinv-C1inv}
\begin{aligned}
\bigl\lVert\Xnew_k^{-1}-C_1^{-1}\bigr\rVert_2
&\le\bigl\lVert C_1^{-1}-\bigl(C_1+\delta C_{1,1}+E_{1,1}(C_1+\delta C_{1,1})
+E_{1,2}\delta C_{2,1}\bigr)^{-1}\bigr\rVert_2\\
&\le \frac{\lVert\delta C_{1,1}+E_{1,1}(C_1+\delta C_{1,1})
+E_{1,2}\delta C_{2,1}\rVert_2}
{1-\lVert\delta C_{1,1}+E_{1,1}(C_1+\delta C_{1,1})
+E_{1,2}\delta C_{2,1}\rVert_2}\\
&\le 2\lVert\delta C_{1,1}+E_{1,1}(C_1+\delta C_{1,1})
+E_{1,2}\delta C_{2,1}\rVert_2\\
&=\teta^2+O(\epsexp),
\end{aligned}
\end{equation*}
where the second inequality is from~\cite[Equation (5.8.4)]{HJ2012}.
%Combining~\eqref{eq:Xnewkinv} with \(\lVert C_1^{-1}\rVert_2=1\)
%and~\eqref{eq:normXnewkinv-C1inv},
It follows that
\[
    \bigl\lVert\Xnew_k^{-1}\bigr\rVert_2
    \leq \bigl\lVert C_1^{-1}\bigr\rVert_2 + \bigl\lVert\Xnew_k^{-1}-C_1^{-1}\bigr\rVert_2
    \leq 1 + \teta^2+O(\epsexp).
\]

With this estimate of \(\bigl\lVert\Xnew_k^{-1}\bigr\rVert_2\), we can now
bound~\eqref{eq:newterm}.
First, we estimate its numerator \(\Xnew_{l\backslash k}\Xnew_k^{-1}\) as
\begin{equation}
\label{eq:estxtwo}
\begin{aligned}
\bigl\lVert\Xnew_{l\backslash k}\Xnew_k^{-1}\bigr\rVert_2
&\le\bigl\lVert\Xnew_{l\backslash k}\bigr\rVert_2
\bigl\lVert\Xnew_k^{-1}\bigr\rVert_2\\
&=\lVert E_{2,1}(C_1+\delta C_{1,1})+(I+E_{2,2})\delta C_{2,1}\rVert_2
\bigl\lVert\Xnew_k^{-1}\bigr\rVert_2\\
&=O(\epsexp).
\end{aligned}
\end{equation}
Next, we analyze the denominator of~\eqref{eq:newterm} as
\begin{equation}
\label{eq:estxthr1}
\begin{aligned}
\quad\left\lVert\bmat{\Xnew_{ l\backslash k}\\
\Xnew_l^{\perp}}\Xnew_k^{-1}\right\rVert_2
&\ge\bigl\lVert\Xnew_l^{\perp}\Xnew_k^{-1}\bigr\rVert_2\\
&\ge\bigl\lVert\Xnew_l^{\perp}\bigl(C_1^{-1}
+(\Xnew_k^{-1}-C_1^{-1})\bigr)\bigr\rVert_2\\
&\ge\bigl\lVert\Xnew_l^{\perp}C_1^{-1}\bigr\rVert_2
-\bigl\lVert\Xnew_l^{\perp}(\Xnew_k^{-1}-C_1^{-1})\bigr\rVert_2\\
&\ge\bigl\lVert\Xnew_l^{\perp}\bigr\rVert_2
-\bigl\lVert\Xnew_l^{\perp}\bigr\rVert_2\bigl\lVert\Xnew_k^{-1}-C_1^{-1}\bigr\rVert_2\\
&\ge\bigl\lVert\Xnew_l^{\perp}\bigr\rVert_2\bigl(1-\teta^2-O(\epsexp)\bigr)\\
&\ge\teta-\frac{3}{2}\teta^3+\frac{1}{2}\teta^5-O(\epsexp),
\end{aligned}
\end{equation}
where the fourth inequality is base on the fact that \(C_1^{-1}\) is unitary,
and in the last line we used the bound
\begin{equation}
\label{eq:estxthr2}
\begin{aligned}
\bigl\lVert\Xnew_l^{\perp}\bigr\rVert_2
&=\lVert E_{3,1}C_1+E_{3,1}\delta C_{1,1}+E_{3,2}\delta C_{2,1}\rVert_2\\
&\ge\lVert E_{3,1}C_1\rVert_2-\lVert E_{3,1}\delta C_{1,1}+E_{3,2}\delta C_{2,1}\rVert_2\\
&\ge\teta-\frac{1}{2}\teta^3-O(\epsexp).
\end{aligned}
\end{equation}
Combining~\eqref{eq:estxtwo}, \eqref{eq:estxthr1} and~\eqref{eq:estxthr2},
we arrive at
\begin{equation}
\label{eq:convterm_xnew}
\frac{\bigl\lVert\Xnew_{ l\backslash k}\Xnew_k^{-1}\bigr\rVert_2}
{\left\lVert\bmat{\Xnew_{ l\backslash k}\\
\Xnew_l^{\perp}}\Xnew_k^{-1}\right\rVert_2}
\le\frac{O(\epsexp)}
{\teta-\frac{3}{2}\teta^3+\frac{1}{2}\teta^5-O(\epsexp)}.
\end{equation}
Substituting~\eqref{eq:convterm_xnew} into~\eqref{eq:convergerate}, we
obtain the conclusion.
\end{proof}

\ISALLEND

\end{document}

%11111111111111111111111111111111111111111111111111111111111111111111111111111